\newcommand{\authorfootnotes}{\renewcommand\thefootnote{\@fnsymbol\c@footnote}}%
\newcommand{\cp}{\mathbin{\Box}}
\newcommand{\vertex}{\node[vertex]}
\tikzstyle{vertex}=[circle, draw, inner sep=0pt, minimum size=6pt]
\newtheorem{thm}{Theorem}
\newtheorem{lem}{Lemma}
\newtheorem{cor}{Corollary}
\newtheorem{prop}{Proposition}
\newtheorem{prob}{Problem}
\newtheorem{conj}{Conjecture}
\newtheorem{claim}{Claim}
\newtheorem{obs}{Observation}
\newcommand{\ideg}{{\rm indeg}}
\newcommand{\odeg}{{\rm outdeg}}
\newcommand{\opack}{\rho^{\rm o}}
\newcommand{\cing}{{\mathcal N}^-_c}
\newcommand{\oing}{{\mathcal N}^-_o}
\newcommand{\cng}{{\mathcal N}_c}
\newcommand{\ugr}[1]{\widehat{#1}}
\begin{document}

\title{Domination in digraphs and their products}

\author{
Bo\v{s}tjan Bre\v{s}ar$^{a,b}$
\and
Kirsti Kuenzel$^{c}$
\and
Douglas F. Rall$^{d}$\\
}


\maketitle

\begin{center}
$^a$ Faculty of Natural Sciences and Mathematics, University of Maribor, Slovenia\\

$^b$ Institute of Mathematics, Physics and Mechanics, Ljubljana, Slovenia\\
$^c$ Department of Mathematics, Trinity College, Hartford, CT, USA\\
$^d$ Department of Mathematics, Furman University, Greenville, SC, USA\\
\end{center}

\begin{abstract}
A  dominating (respectively, total dominating) set $S$ of a digraph $D$ is a set of vertices in $D$ such that the union of the closed (respectively, open) out-neighborhoods of vertices in $S$ equals the vertex set of $D$. The minimum size of a dominating (respectively, total dominating) set of $D$ is the  domination (respectively, total domination) number of $D$, denoted $\gamma(D)$ (respectively,~$\gamma_t(D)$). The maximum number of pairwise disjoint closed (respectively,~open) in-neighborhoods of $D$ is denoted by $\rho(D)$ (respectively,~$\opack(D)$).
We prove that in digraphs whose underlying graphs have girth at least $7$, the closed (respectively,~open) in-neighborhoods enjoy the Helly property, and use these two results to prove that in any ditree $T$ (that is, a digraph whose underlying graph is a tree), $\gamma_t(T)=\rho^o(T)$ and $\gamma(T)=\rho(T)$. By using the former equality we then prove that $\gamma_t(G\times T)=\gamma_t(G)\gamma_t(T)$, where $G$ is any digraph and $T$ is any ditree, each without a source vertex, and $G\times T$ is their direct product. From the equality $\gamma(T)=\rho(T)$ we derive the bound $\gamma(G\cp T)\ge\gamma(G)\gamma(T)$, where $G$ is an arbitrary digraph, $T$ an arbitrary ditree and $G\cp T$ is their Cartesian product. In general digraphs this Vizing-type bound fails, yet we prove that for any digraphs $G$ and $H$, where $\gamma(G)\ge\gamma(H)$, we have $\gamma(G \cp H) \ge \frac{1}{2}\gamma(G)(\gamma(H) + 1)$. This inequality is sharp as demonstrated by an infinite family of examples. Ditrees $T$ and digraphs $H$ enjoying $\gamma(T\cp H)=\gamma(T)\gamma(H)$ are also investigated.
\end{abstract}

\noindent
{\bf Keywords:} digraph, domination, packing, Cartesian product, direct product  \\

\noindent
{\bf AMS Subj.\ Class.\ (2010)}: 05C20, 05C69, 05C76.

\maketitle

\section{Introduction}

  Investigations of domination in digraphs have had a different focus from those in graphs. A large part of the studies of domination in digraphs was concentrated around the concept of the kernel, cf.~\cite[Section 7.7]{hhs1} and~\cite[Chapter 15]{hhs2}. Since a digraph $D=(V(D),A(D))$ can be considered as a generalization of a graph (the set of arcs $A(D)$ in a graph is restricted to be a symmetric relation on the set of vertices $V(D)$), it is natural to ask, which of the important results on domination in graphs extend to the context of digraphs. In this paper, we answer several instances of such questions.

A classical result of Meir and Moon from 1975 states that the domination number of a tree $T$ equals the maximum number of pairwise disjoint closed neighborhoods in $T$ (called the {\em $2$-packing number} of $T$ and denoted $\rho_2(T)$):
\begin{thm} \label{thm:mm}{\rm (\cite{mm-1975})}
If $T$ is a tree, then $\rho_2(T)=\gamma(T)$.
\end{thm}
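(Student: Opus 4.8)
The plan is to prove the two inequalities $\rho_2(T)\le\gamma(T)$ and $\gamma(T)\le\rho_2(T)$ separately. The first holds for every graph and needs no tree structure: fix a maximum $2$-packing $P$ and a minimum dominating set $D$, and send each $p\in P$ to a vertex $d\in D$ that dominates it, i.e.\ $d\in N[p]$. If two distinct packing vertices $p\ne p'$ were sent to the same $d$, then $d\in N[p]\cap N[p']$, contradicting the disjointness of the closed neighborhoods in a $2$-packing. Hence the map is injective and $\rho_2(T)=|P|\le|D|=\gamma(T)$.

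The substance is the reverse inequality $\gamma(T)\le\rho_2(T)$, which I would prove by strong induction on $|V(T)|$, phrased for forests so that the reduced objects stay in the class. The engine is a deletion that drops both parameters by exactly one. Rooting $T$ at a leaf, let $u$ be a leaf at maximum depth, $v$ its support vertex (so that every child of $v$ is a leaf), and $w$ the parent of $v$. I want a forest $F$, obtained by deleting a closed neighborhood near $v$, satisfying
\begin{equation*}
\gamma(T)\le\gamma(F)+1\qquad\text{and}\qquad \rho_2(F)+1\le\rho_2(T).
\end{equation*}
Granting these, the induction closes at once, since
\begin{equation*}
\gamma(T)\le\gamma(F)+1=\rho_2(F)+1\le\rho_2(T),
\end{equation*}
where the middle equality is the inductive hypothesis applied to $F$; together with the easy direction this forces $\gamma(T)=\rho_2(T)$.

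For the domination bound I would add the support vertex $v$ to a minimum dominating set of $F$, noting that $v$ covers the deleted broom and its parent. For the packing bound I would add the deepest leaf $u$ to a maximum $2$-packing of $F$: starting from $u$ one must traverse $v$ and then $w$ before reaching any surviving vertex, so $u$ lies at distance at least $3$ from all of $V(F)$ and therefore conflicts with nothing. Thus the only genuine task is to arrange that the \emph{same} deletion makes both displayed inequalities tight.

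This uniformity is exactly where I expect the difficulty to concentrate. Taking $F=T-N[v]$ deletes the grandparent $w$, which is harmless for the packing move but can inflate $\gamma(F)$: if $w$ has several leaf children then $w$ was serving as a shared dominator, so removing it over-counts and $\gamma(T)\le\gamma(F)+1$ goes loose. Keeping $w$ (deleting only the broom at $v$) repairs $\gamma$ but may force $w$ into every maximum $2$-packing of $F$, which blocks the extension by $u$. The resolution is a short case analysis on the local configuration at $w$: when $w$ is an ordinary grandparent one contracts at $v$, whereas when $w$ is itself a support vertex (a domination hub) one instead deletes $N[w]$, adds $w$ to the dominating set, and extends the packing by a leaf lying at distance $3$ from the survivors. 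A handful of small base cases (isolated vertices, stars, and brooms) complete the argument. I would expect the bookkeeping in each case to be routine once the correct contraction vertex is fixed; the conceptual crux is recognizing that the choice between contracting at $v$ and at $w$ is dictated by whether $w$ doubles as a support vertex.
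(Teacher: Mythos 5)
Your first inequality ($\rho_2(T)\le\gamma(T)$ via the injection from a packing into a dominating set) is correct. The problem is that the entire content of the theorem sits in the inductive engine for $\gamma(T)\le\rho_2(T)$, which you do not actually carry out, and the specific recipes you propose fail on small trees; moreover your diagnosis of where the difficulty lies is inverted. For $F=T-N[v]$ the domination inequality $\gamma(T)\le\gamma(F)+1$ \emph{always} holds (add $v$ to a minimum dominating set of the induced subforest $F$), so your worry that it ``goes loose'' is immaterial --- loose is still the direction the chain needs. What actually breaks is the packing step, which you declare harmless: a maximum $2$-packing of $F$ need not be a $2$-packing of $T$, because two surviving vertices can be far apart (even in different components) in $F$ yet at distance $2$ in $T$ through the deleted grandparent $w$. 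Concretely, let $T$ have edges $rw$, $wx$, $wv$, $vu$, $vu'$, so that $u$ is a deepest leaf, $v$ its support, $w$ the grandparent, and $\gamma(T)=\rho_2(T)=2$. Then $T-N[v]=\{r,x\}$ is two isolated vertices, so $\rho_2(T-N[v])+1=3>2=\rho_2(T)$: the inequality you need is simply false for this deletion, and the set $\{r,x,u\}$ your construction produces is not a $2$-packing since $d_T(r,x)=2$. Your fallback for the case that $w$ is a support vertex fares no better on the same tree: $T-N[w]=\{u,u'\}$ gives $\rho_2(T-N[w])+1=3>2$. The one deletion that does work here --- removing only $v$ and its leaf children while keeping $w$ --- is precisely the variant you set aside, and it succeeds only if one additionally shows that a maximum $2$-packing of the reduced forest can be chosen to avoid $w$ (here $\{r\}$ rather than $\{w\}$). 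That choice lemma, or an equivalent strengthening of the induction hypothesis, is the real crux of the Meir--Moon argument and is absent from your sketch; as written, the ``routine bookkeeping'' you defer is exactly the theorem.

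Separately, note that even a repaired induction would be a genuinely different proof from the paper's. The paper avoids induction entirely: it observes that the closed neighborhoods of a tree satisfy the Helly property, so every clique of the closed neighborhood graph $T^2$ lies inside a single closed neighborhood; since $T^2$ is chordal, hence perfect, the Perfect Graph Theorem gives $\alpha(T^2)=k(T^2)$, which translates directly into $\rho_2(T)=\gamma(T)$. That route is what the authors then generalize to digraphs, so if you want your argument to align with the paper you would need the Helly/perfection machinery rather than a leaf-deletion induction.
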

\noindent In Section~\ref{sec:p-d-graphs}, we give a short alternative proof of this result by combining the Helly property of the closed neighborhoods in trees with the Perfect Graph Theorem of Lov\' asz~\cite{lo-1972}. The idea of this alternative proof is then transferred to digraphs, and in Section~\ref{sec:p-d-digraphs} we prove the digraph version of Theorem~\ref{thm:mm}:
\begin{thm}  \label{thm:packing-dom-ditrees}
For any ditree $T$, $\rho(T) = \gamma(T)$.
\end{thm}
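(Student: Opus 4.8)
The plan is to reinterpret both invariants through the intersection graph of the closed in-neighborhoods and then to invoke perfection. Let $G$ be the graph whose vertices are the vertices of $T$, where $u$ and $v$ are adjacent in $G$ exactly when $\cing(u)\cap\cing(v)\neq\emptyset$; that is, $G$ is the intersection graph of the family $\{\cing(v):v\in V(T)\}$. By definition an independent set of $G$ is precisely a set of vertices whose closed in-neighborhoods are pairwise disjoint, so $\alpha(G)=\rho(T)$. The overall goal is to prove the chain
\[
\rho(T)=\alpha(G)=\theta(G)=\gamma(T),
\]
where $\theta(G)$ denotes the clique cover number (the minimum number of cliques needed to partition $V(G)$). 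The first equality holds by definition and the middle one will come from perfection, so the real work is the outer identification $\theta(G)=\gamma(T)$ together with the perfection of $G$.

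First I would pin down the cliques of $G$ using the Helly property. Since the underlying graph of the ditree $T$ is a tree, it has girth at least $7$, so the closed in-neighborhoods of $T$ enjoy the Helly property proved earlier in the paper. Consequently a set of vertices induces a clique in $G$ if and only if their closed in-neighborhoods are pairwise intersecting, which by Helly is equivalent to the existence of a single vertex lying in all of them. This lets me translate between clique covers and dominating sets. Given a partition of $V(G)$ into cliques, choose for each clique a vertex $w_i$ belonging to every closed in-neighborhood in that clique; then $w_i\in\cing(v)$ means $v\in N^+[w_i]$, writing $N^+[w_i]$ for the closed out-neighborhood of $w_i$, so as the cliques exhaust all of $V(T)$ the set $\{w_i\}$ out-dominates $T$, giving $\gamma(T)\le\theta(G)$. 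Conversely, from a minimum dominating set $S$, assign each vertex $v$ to a vertex $s\in S$ that out-dominates it; the in-neighborhoods assigned to a fixed $s$ all contain $s$ and hence form a clique, so $\theta(G)\le\gamma(T)$. Thus $\theta(G)=\gamma(T)$.

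It remains to show that $G$ is perfect. Here I would use that each closed in-neighborhood $\cing(v)$ is a substar of $T$ centered at $v$ and therefore a subtree of the underlying tree; the intersection graph of a family of subtrees of a tree is chordal, and chordal graphs are perfect. By the Perfect Graph Theorem of Lov\'asz, the complement $\overline{G}$ is then perfect as well, whence $\theta(G)=\chi(\overline{G})=\omega(\overline{G})=\alpha(G)$. Combining this with the two previous paragraphs yields $\gamma(T)=\theta(G)=\alpha(G)=\rho(T)$, as desired.

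I expect the main obstacle to be the clique/Helly bookkeeping in the middle step: one must be scrupulous about the asymmetry between in- and out-neighborhoods (domination is governed by out-neighborhoods, while the packing and the intersection graph are built from in-neighborhoods) and must verify that the Helly property is being applied to the correct family and in the correct direction. The perfection of $G$ is comparatively routine once one observes that directed in-neighborhoods remain subtrees of the underlying tree, though it is worth double-checking that possible digons do not spoil the substar structure.
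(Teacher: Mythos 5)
Your proof is correct and follows essentially the same route as the paper: the same closed in-neighborhood intersection graph, the same translation between clique covers and dominating sets via the girth-at-least-$7$ Helly lemma, and the same appeal to Lov\'asz's Perfect Graph Theorem to pass from $\alpha$ to the clique cover number. The only (harmless) variation is that you establish chordality of the intersection graph by viewing each closed in-neighborhood as a substar, hence a subtree, of $\ugr{T}$ and invoking the classical subtree-intersection characterization of chordal graphs, whereas the paper rules out chordless cycles of length at least four by a direct argument.
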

\noindent  Theorem~\ref{thm:packing-dom-ditrees} extends the recent result of Mojdeh, Samadi and Gonz\'{a}lez Yero~\cite[Theorem 5]{msy-2019} who proved it for oriented trees.  (An oriented tree is a digraph obtained from an undirected tree by orienting each of its edges in one direction.)

The following analogue of Theorem~\ref{thm:packing-dom-ditrees} for the total domination number in ditrees, extends a result of Rall~\cite{ra-2005} from the graph context to digraphs.

\begin{thm}  \label{thm:openpacking-tdom-ditrees}
If $T$ is a ditree such that $\delta^-(T) \ge 1$, then $\opack(T) = \gamma_t(T)$.
\end{thm}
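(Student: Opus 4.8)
The plan is to encode both invariants in a single auxiliary graph and then invoke the Perfect Graph Theorem, exactly as announced for the undirected case. Let $L$ be the graph on vertex set $V(T)$ in which $u$ and $v$ are adjacent precisely when they have a common in-neighbor, that is, $N^-(u)\cap N^-(v)\neq\emptyset$. Because $\delta^-(T)\ge 1$, every open in-neighborhood is nonempty, so $L$ is well defined and every vertex is a legitimate candidate for an open packing. I would first record two identifications. A set of vertices is an open in-packing exactly when their open in-neighborhoods are pairwise disjoint, which is exactly an independent set of $L$; hence $\opack(T)=\alpha(L)$. Second, I claim the clique cover number of $L$ equals $\gamma_t(T)$.

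To see $\overline{\chi}(L)\le \gamma_t(T)$, take a minimum total dominating set $S$ and assign to each $s\in S$ the set $N^+(s)$ of its out-neighbors; each such set is a clique of $L$ (its members all share the in-neighbor $s$), and together these cliques cover $V(T)$ because $S$ totally dominates $T$. For the reverse inequality $\gamma_t(T)\le\overline{\chi}(L)$, I would invoke the Helly property of open in-neighborhoods established earlier in the excerpt (a ditree has underlying girth at least $7$): the vertices of any clique of $L$ have pairwise intersecting in-neighborhoods, so by Helly they admit a common element $t$, and then $t$ is an in-neighbor of every vertex of that clique. Applying this to each clique of a minimum clique cover produces a total dominating set of the same cardinality. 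Thus $\overline{\chi}(L)=\gamma_t(T)$.

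It then remains to prove that $L$ is perfect, and I would do this by showing $L$ is chordal. Suppose $L$ contained an induced cycle $u_1u_2\cdots u_k$ with $k\ge 4$. For each index $i$ modulo $k$ choose a common in-neighbor $a_i\in N^-(u_i)\cap N^-(u_{i+1})$; in the tree underlying $T$, the vertex $a_i$ is adjacent to both $u_i$ and $u_{i+1}$, so $W=u_1a_1u_2a_2\cdots u_ka_ku_1$ is a closed walk in that tree. Every closed walk of positive length in a tree must backtrack on two consecutive edges, and the vertices of $W$ alternate between $u$'s and $a$'s, so one of two cases occurs. Backtracking at some $a_i$ forces $u_i=u_{i+1}$, impossible on a cycle. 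Backtracking at some $u_i$ forces $a_{i-1}=a_i$, whence $a_i$ is a common in-neighbor of $u_{i-1}$ and $u_{i+1}$ and so $u_{i-1}u_{i+1}$ is an edge of $L$; since $k\ge 4$ the vertices $u_{i-1}$ and $u_{i+1}$ are nonadjacent on the cycle, contradicting that it is induced. Hence no induced cycle of length at least $4$ exists, so $L$ is chordal and therefore perfect.

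Finally, by the Perfect Graph Theorem of Lov\'asz \cite{lo-1972} the complement of the perfect graph $L$ is again perfect, which yields $\alpha(L)=\overline{\chi}(L)$. Combining this with the two identifications above gives $\opack(T)=\alpha(L)=\overline{\chi}(L)=\gamma_t(T)$, as desired. I expect the chordality argument to be the only genuinely delicate step; once the Helly property for open in-neighborhoods (already available since a ditree has underlying girth at least $7$) is in hand, the remainder is essentially bookkeeping, and the hypothesis $\delta^-(T)\ge 1$ enters precisely to guarantee that all in-neighborhoods are nonempty so that $\gamma_t(T)$ is finite and $L$ behaves as claimed.
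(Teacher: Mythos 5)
Your proposal is correct and follows essentially the same route as the paper: the auxiliary graph $L$ is exactly the open in-neighborhood graph $\oing(T)$, the identification of its clique cover number with $\gamma_t(T)$ is the paper's Lemma~\ref{lem:dicliquesopen}, and the chordality-plus-Perfect-Graph-Theorem finish matches the paper's argument. The only cosmetic difference is that you establish chordality via the ``every closed walk of positive length in a tree backtracks'' fact, where the paper instead argues directly that the auxiliary vertices $a_i$ must be distinct and hence yield a cycle in $\ugr{T}$; both are sound.
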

\noindent We then consider the direct products of digraphs, and prove the following equality for their total domination numbers (again, this result is a generalization of an analogous result on graphs by Rall~\cite{ra-2005}):

\begin{thm} \label{thm:equaltotalfordirect}
If $G$ is a digraph with $\delta^-(G) \ge 1$ such that $\opack(G)=\gamma_t(G)$, then $\gamma_t(G \times H)=\gamma_t(G)\gamma_t(H)$, for every digraph $H$ with minimum in-degree at least $1$.
\end{thm}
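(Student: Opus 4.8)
The plan is to establish the two inequalities $\gamma_t(G \times H) \le \gamma_t(G)\gamma_t(H)$ and $\gamma_t(G\times H) \ge \gamma_t(G)\gamma_t(H)$ separately, with the hypothesis $\opack(G)=\gamma_t(G)$ entering only in the second one. Throughout I would use the defining arc rule of the direct product: an arc from $(s,t)$ to $(g,h)$ exists in $G\times H$ precisely when $s\to g$ in $G$ and $t\to h$ in $H$, equivalently $s\in\oing(g)$ and $t\in\oing(h)$, where $\oing$ denotes the open in-neighborhood. Since $\delta^-(G)\ge 1$ and $\delta^-(H)\ge 1$, every vertex of $G\times H$ has positive in-degree, so $\gamma_t(G\times H)$ is well defined.

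For the upper bound I would take minimum total dominating sets $A$ of $G$ and $B$ of $H$ and verify that $A\times B$ totally dominates $G\times H$. Given any $(g,h)$, choose $a\in A\cap\oing(g)$ and $b\in B\cap\oing(h)$, which exist because $A$ and $B$ are total dominating sets; then $(a,b)\in A\times B$ has an arc to $(g,h)$. Hence $\gamma_t(G\times H)\le|A\times B|=\gamma_t(G)\gamma_t(H)$, and this direction needs no assumption beyond the existence of $A$ and $B$.

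For the lower bound I would invoke the packing hypothesis. Let $P=\{p_1,\dots,p_k\}$ be a maximum open packing of $G$, so that $k=\opack(G)=\gamma_t(G)$ and the in-neighborhoods $\oing(p_1),\dots,\oing(p_k)$ are pairwise disjoint. Fix a minimum total dominating set $S$ of $G\times H$ and, for each $i$, set $S_i=\{(s,t)\in S : s\in\oing(p_i)\}$. Because the sets $\oing(p_i)$ are pairwise disjoint, so are the sets $S_i$, which gives $|S|\ge\sum_{i=1}^{k}|S_i|$.

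The crux of the argument is to show $|S_i|\ge\gamma_t(H)$ for every $i$, and this is the step I expect to require the most care. I would project $S_i$ onto its second coordinate, setting $T_i=\{t : (s,t)\in S_i\text{ for some }s\}$, and argue that $T_i$ is a total dominating set of $H$. Indeed, for any $h\in V(H)$ the vertex $(p_i,h)$ is totally dominated by some $(s,t)\in S$, which forces $s\in\oing(p_i)$ and $t\in\oing(h)$; the former places $(s,t)$ in $S_i$, so $t\in T_i$ and $t\to h$. Thus $|S_i|\ge|T_i|\ge\gamma_t(H)$, and summing yields $\gamma_t(G\times H)=|S|\ge k\,\gamma_t(H)=\gamma_t(G)\gamma_t(H)$, which together with the upper bound gives the claimed equality. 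The reason the slicing succeeds is the coordinatewise arc rule of the direct product, which lets every dominator of $(p_i,h)$ split into a $G$-part lying in $\oing(p_i)$ and an independent $H$-part dominating $h$; the equality $\opack(G)=\gamma_t(G)$ is exactly what supplies $\gamma_t(G)$ pairwise disjoint in-neighborhoods to slice along.
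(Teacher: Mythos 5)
Your proposal is correct and follows essentially the same route as the paper: the upper bound via $A\times B$ and the lower bound $\gamma_t(G\times H)\ge \opack(G)\gamma_t(H)$ combined with the hypothesis $\opack(G)=\gamma_t(G)$. The only difference is that the paper states the lower bound as an immediate consequence of the definitions, whereas you supply the slicing-and-projection argument in full; that argument is exactly the intended one.
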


\noindent {The above theorems provide the equality $\gamma_t(G \times H)=\gamma_t(G)\gamma_t(H)$, whenever $G$ is a ditree with $\delta^-(G) \ge 1$.

The following conjecture concerning the domination number of a Cartesian product of (undirected) graphs was made by Vizing in~1968.

\begin{conj}\label{VC}{\rm (\cite{v1968})}
If $G$ and $H$ are graphs, then
\begin{equation}\label{VCeqn}
  \gamma(G\cp H) \ge \gamma(G)\,\gamma(H).
\end{equation}
\end{conj}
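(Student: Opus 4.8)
The plan is to attack the inequality head-on by the projection--partition method, pushing the count to an exact multiplicative bound rather than settling for the customary factor-of-two loss. Set $k=\gamma(G)$ and $\ell=\gamma(H)$, let $D$ be a minimum dominating set of $G\cp H$, and aim to show $|D|\ge k\ell$. Choose a minimum dominating set $\{u_1,\dots,u_k\}$ of $G$ together with the associated partition $V(G)=V_1\cup\dots\cup V_k$ with $V_i\subseteq N_G[u_i]$; this splits the product into columns $X_i=V_i\times V(H)$, so that $|D|=\sum_{i=1}^{k}|D_i|$ with $D_i=D\cap X_i$. First I would record the basic dichotomy: each vertex $(v,h)$ is dominated by $D$ either vertically, by some $(v,h')\in D$ with $h'\in N_H[h]$, or horizontally, by some $(v',h)\in D$ with $v'\in N_G[v]$. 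The whole difficulty lives in the horizontal case, where the dominator may sit in a column $X_j$ with $j\ne i$ and thereby leak out of its own column.

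The heart of the argument would be a charging lemma proved column by column. Fix a column $X_i$ and call an $H$-slice $V_i\times\{h\}$ \emph{self-dominated} when the vertical dominators available inside $X_i$ already cover it; the vertices of the remaining slices must be reached horizontally, and I would argue that, aggregated over all $h\in V(H)$, these horizontal demands force the projections of $D$ onto the $H$-layers $V(G)\times\{h\}$ to be dominating sets of $G$ on the portions they are obliged to cover. Double-counting the pairs $(i,h)$ at which column $i$ fails to be self-dominated then bounds $\sum_i|D_i|$ from below. The decisive improvement over the standard estimate $\gamma(G\cp H)\ge\tfrac{1}{2}\gamma(G)(\gamma(H)+1)$ (the undirected twin of the digraph bound recorded in the abstract) would come from replacing the lossy set-cover inequality used at this step with an exact identity. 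Here I would import precisely the machinery this paper develops for ditrees: establish that, on the relevant sub-hypergraph, the closed neighborhoods enjoy the Helly property, deduce through the Perfect Graph Theorem of Lov\'{a}sz~\cite{lo-1972} that the associated hypergraph is perfect, and thereby upgrade the covering bound into the matching packing equality $\rho_2=\gamma$ in the spirit of Theorem~\ref{thm:mm}. If the vertical and horizontal charges can be arranged to realize disjoint packings within each column, their sizes multiply and the target $k\ell$ is met with no spurious factor of two.

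The step I expect to be the genuine obstacle is exactly this last perfection claim for the product, and I do not expect it to go through unconditionally. The Cartesian product $G\cp H$ always contains four-cycles, so its underlying girth collapses to $4$; the Helly property of closed neighborhoods and the perfection that powered the tree case are consequently lost, and with them the exact packing--covering identity. The honest reduction is therefore that the argument closes whenever $G$ admits a partition into $\gamma(G)$ cells whose closed-neighborhood hypergraph stays perfect after the cells are completed: a Barcalkin--German type decomposability condition satisfied by trees (for which the identity $\rho_2(T)=\gamma(T)$ of Theorem~\ref{thm:mm} already secures the bound) but failing for arbitrary graphs. Turning that conditional statement into an unconditional one, that is, manufacturing the right tight fractional relaxation for every $G$, is the crux; it is precisely the gap that has kept Conjecture~\ref{VC} open, and I would expect essentially all of the effort to concentrate there.
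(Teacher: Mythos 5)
You were asked to prove Conjecture~\ref{VC}, which is Vizing's conjecture of 1968. The paper does not prove this statement --- it explicitly records it as open --- and no proof exists in the literature; the paper only establishes partial results in its digraph setting: the packing bound of Proposition~\ref{prp:packing}, the consequence that ditrees satisfy Vizing's inequality (Corollary~\ref{cor:allditrees}), and the digraph analogue (Theorem~\ref{thm:Zerbib}) of the Clark--Suen/Zerbib bound $\gamma(G\cp H)\ge\frac{1}{2}\gamma(G)\gamma(H)+\frac{1}{2}\max\{\gamma(G),\gamma(H)\}$. So there is no proof to compare yours against, and your proposal, by your own candid admission in the final paragraph, is not a proof: it is the standard projection--partition framework together with a hoped-for strengthening that you yourself show cannot be carried out.

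The gap is exactly where you place it, and it is worth being precise about why it is fatal rather than merely technical. Your plan is to replace the lossy set-cover step in the Clark--Suen double count by the exact identity $\rho_2=\gamma$ on each column $X_i=V_i\times V(H)$, obtained via the Helly property and the Perfect Graph Theorem as in the paper's proof of Theorem~\ref{thm:mm}. But the paper's machinery requires large girth in an essential way: Lemma~\ref{lem:dicliques} needs girth at least $7$ just to guarantee that a maximal clique of the closed in-neighborhood graph is contained in a single closed out-neighborhood, and without that lemma the quantity $\chi(\overline{\cng(\cdot)})$ computes a minimum cover by cliques of $\cng(\cdot)$, which is in general strictly smaller than a minimum cover by closed neighborhoods --- so even perfection of $\cng$ would not yield $\rho_2=\gamma$. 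In $G\cp H$ the situation is worse than a failed hypothesis: whenever both factors contain an edge, the product contains four-cycles, on which closed neighborhoods are pairwise intersecting with empty common intersection, so the Helly property is irrecoverably false, and indeed $\rho_2<\gamma$ already for $C_4$ itself. Your fallback --- that the argument closes when $G$ admits a partition into $\gamma(G)$ cells whose completed closed-neighborhood structure remains well-behaved --- is precisely the classical decomposability condition of Barcalkin and German, a known sufficient condition (covering trees, where Theorem~\ref{thm:mm} plus the Proposition~\ref{prp:packing}-style bound already give the result more cheaply), and it fails for general $G$. Without it your charging scheme degrades back to the factor-$\frac{1}{2}$ estimate, which is the current state of the art. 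In short: the step you flag as ``the genuine obstacle'' is not a missing lemma inside an otherwise complete argument; it is the entire content of the conjecture, and the proposal proves nothing beyond what is already known.
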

\noindent In spite of numerous attempts to resolve the conjecture, it is still open; see the survey~\cite{BDGHHKR2012} containing many partial results on the conjecture and other developments.

For convenience sake we shall refer to the inequality in~\eqref{VCeqn} as \emph{Vizing's inequality}.  A digraph $G$ {\em satisfies Vizing's inequality} if
$\gamma(G\cp H)\ge\gamma(G)\gamma(H)$ for every digraph $H$.
In Section~\ref{sec:results}, we present infinite families of digraphs that do not satisfy Vizing's inequality.  However,
we prove the following (weaker) lower bound on the domination number of the Cartesian product of two digraphs:
\[
\gamma(G \cp H) \ge \frac{1}{2}\gamma(G)\gamma(H) + \frac{1}{2}\max\{\gamma(G),\gamma(H)\}\,,
\]
and present an infinite family of digraphs that attain the bound.

 One of the most basic lower bounds, which approximates the lower bound in Vizing's inequality, is obtained when one of the numbers $\gamma(G)$ or $\gamma(H)$ is replaced by the packing number of the corresponding factor graph. This result extends easily to digraphs, as we will show in Section~\ref{sec:results}.

\begin{prop} \label{prp:packing}
If $G$ and $H$ are digraphs, then
\[
\gamma(G \cp H) \ge \max\{\gamma(G)\rho(H),\gamma(H)\rho(G)\}\,.
\]
\end{prop}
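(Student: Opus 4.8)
The plan is to prove the stronger-looking single inequality $\gamma(G\cp H)\ge\gamma(G)\rho(H)$; the bound with the two factors swapped then follows by the identical argument applied to $H\cp G\cong G\cp H$, and taking the maximum of the two yields the stated inequality. The whole argument rests on the duality $u\in N^+[v]\iff v\in N^-[u]$, which lets me read a dominating set of a digraph as a set that meets every closed in-neighborhood, together with the description of in-neighborhoods in a Cartesian product: for a vertex $(g,h)$ of $G\cp H$ one has $N^-[(g,h)]=\bigl(N^-_G[g]\times\{h\}\bigr)\cup\bigl(\{g\}\times N^-_H[h]\bigr)$, the two sets meeting only in $(g,h)$. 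I would record this formula explicitly at the outset.

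First I would fix a maximum packing $P$ of $H$, so that $|P|=\rho(H)$ and the closed in-neighborhoods $\{N^-_H[p]:p\in P\}$ are pairwise disjoint, together with a minimum dominating set $D$ of $G\cp H$. For each $p\in P$ I set $D_p=\{(g',h')\in D:h'\in N^-_H[p]\}$. Because the sets $N^-_H[p]$ are pairwise disjoint as $p$ ranges over $P$, the sets $D_p$ are pairwise disjoint subsets of $D$, and hence $|D|\ge\sum_{p\in P}|D_p|$.

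The key step is to show that the projection $\pi_G(D_p)=\{g':(g',h')\in D_p\}$ is a dominating set of $G$ for each $p$, which gives $|D_p|\ge|\pi_G(D_p)|\ge\gamma(G)$. To see this I take an arbitrary $g\in V(G)$ and dominate the product vertex $(g,p)$ by some $(g',h')\in D$. By the in-neighborhood formula applied to $(g,p)$ there are two cases: either $h'=p$ and $g'\in N^-_G[g]$, or $g'=g$ and $h'\in N^-_H(p)\subseteq N^-_H[p]$. In the first case $h'=p\in N^-_H[p]$ places $(g',h')$ in $D_p$, and $g'\in N^-_G[g]$ shows that $g'$ dominates $g$ in $G$; in the second case $(g,h')\in D_p$, and $g'=g\in N^-_G[g]$ shows that $g$ dominates itself. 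Either way $g$ lies in the closed out-neighborhood of some vertex of $\pi_G(D_p)$, so $\pi_G(D_p)$ is a dominating set of $G$. Combining the inequalities yields $|D|\ge\sum_{p\in P}|D_p|\ge|P|\,\gamma(G)=\rho(H)\,\gamma(G)$, as desired.

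I do not expect a serious obstacle, which matches the remark in the text that the result extends easily to digraphs. The single point that requires genuine care is the bookkeeping of arc directions: domination is governed by out-neighborhoods while the packing number is defined through in-neighborhoods, and the argument closes only because of the duality above and because disjointness is imposed on the in-neighborhoods $N^-_H[p]$ rather than on out-neighborhoods. For this reason I would keep the two domination cases cleanly separated and verify at each step that the element produced indeed lies in $D_p$ and that its $G$-coordinate dominates $g$.
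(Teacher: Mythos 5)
Your proof is correct and takes essentially the same approach as the paper: both decompose the dominating set of $G \cp H$ into pairwise disjoint blocks indexed by the closed in-neighborhoods of a maximum packing of one factor, and argue that each block must contain at least $\gamma$-of-the-other-factor many vertices. You merely make explicit the projection step (showing $\pi_G(D_p)$ dominates $G$) that the paper's proof sketch leaves implicit, which is a welcome addition rather than a deviation.
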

\noindent Theorem~\ref{thm:packing-dom-ditrees} combined with Proposition~\ref{prp:packing} then implies that every ditree $T$ satisfies Vizing's inequality. In Section~\ref{sec:equality}, we investigate pairs of digraphs $G$ and $H$ for which the equality $\gamma(G \cp H) =\gamma(G)\gamma(H)$ is satisfied. Some partial results on such ditrees are obtained, and we present a large class of trees that satisfy the equality when multiplied with $P_4$. A number of open problems are posed along the way.

\section{Definitions and notation} \label{sec:defnot}

For undirected graphs we mostly follow the definitions and notation found in~\cite{imklra-2008}.
A \emph{directed graph} $G$, or \emph{digraph} for short, consists of a set of vertices $V(G)$ along with a binary relation $A(G)$ on $V(G)$.  We call an element
$(u,v)$ of $A(G)$ an \emph{arc}.  If $(u,v) \in A(G)$, then we say that $u$ \emph{is adjacent to} $v$ or that $v$ \emph{is adjacent from} $u$.  If we simply want to indicate
that at least one of the arcs $(u,v)$ and $(v,u)$ is in the digraph $G$, then we will say that $u$ and $v$ are \emph{adjacent} in $G$.
In this paper we will be assuming that the binary relation is irreflexive (that is, we are assuming that $(x,x) \notin A(G)$ for every $x \in V(G)$).  Also, when there is no chance of confusion, we will shorten the notation for an arc from $(u,v)$ to $uv$.

A vertex $u$ is an \emph{in-neighbor} of $v$ if $uv \in A(G)$ and an \emph{out-neighbor} of $v$ if $vu \in A(G)$. The \emph{open out-neighborhood} of $v$ is the set of out-neighbors of $v$ and is denoted by $N^+_G(v)$. The \emph{closed out-neighborhood} of $v$ is the set $N^+_G[v]$ defined by $N^+_G[v]=N^+_G(v)\cup \{v\}$.  In a similar manner, the \emph{open in-neighborhood} of $v$ is the set $N^-_G(v)$, which consists of all the in-neighbors of $v$; the \emph{closed in-neighborhood} of $v$ is the set $N^-_G[v]=N^-_G(v)\cup \{v\}$.   The \emph{in-degree} of $v$ is the number $\ideg(v)=|N^-_G(v)|$ and the \emph{out-degree} of $v$ is $\odeg(v)=|N^+_G(v)|$.  The minimum of  the in-degrees (respectively, out-degrees) among all the vertices of $G$ is denoted $\delta^-(G)$ (respectively, $\delta^+(G)$).  If $S \subseteq V(G)$, then
\[N^-_G(S)=\bigcup_{v\in S}N^-_G(v),  \hskip 2cm N^-_G[S]=\bigcup_{v\in S}N^-_G[v]\,,\]
\[ N^+_G(S)=\bigcup_{v\in S}N^+_G(v), \hskip .8cm \text{and} \hskip .8cm  N^+_G[S]=\bigcup_{v\in S}N^+_G[v]\,.\]
If the digraph $G$ is clear from the context, then we will often omit the subscript $G$ from the above notations.

In certain instances we will be considering both digraphs and graphs, in which case we may say that the graph is an \emph{undirected graph} for emphasis.
For a given digraph $G$, the \emph{underlying graph} of $G$ is the undirected graph, denoted $\ugr{G}$, with vertex set $V(G)$ in which $\{u,v\}$ is an edge in $\ugr{G}$ precisely when one or both of the arcs $uv$ and $vu$ belongs to the digraph $G$.  If the underlying graph is a tree, then we call the digraph a \emph{ditree}.    If $v$ is a vertex in $G$, then we call $v$ a \emph{leaf} of $G$ if it is a leaf in the underlying graph $\ugr{G}$. If $v$ is a leaf with no in-neighbors, we call it an \emph{isolated leaf}.  Otherwise, we refer to $v$ as a \emph{non-isolated leaf}. We call $u$ a \emph{support vertex} of $G$ if $u$ is a support vertex in $\ugr{G}$. Similarly, we call $u$ a \emph{strong support vertex} of $G$ if $u$ is a strong support vertex (that is, $u$ has at least two neighbors that are leaves) in $\ugr{G}$.  If $u$ is a support vertex, then any leaf adjacent to or adjacent from $u$ will be called a \emph{leaf of $u$}. An undirected graph $C$ is a \emph{corona} if there exists an undirected graph $H$ and $C$ is obtained from $H$ by the following construction.  For each $h \in V(H)$ add a new vertex $h'$ and a new edge $hh'$.

Given an undirected graph $H$, an \emph{orientation of $H$} is the digraph obtained from $H$ by replacing each edge $\{x,y\}$ of $H$ by exactly one of the arcs $(x,y)$ or $(y,x)$.  In this case the resulting digraph is called an \emph{oriented graph}.  When the undirected graph is a tree or a cycle we called the resulting digraph an \emph{oriented tree} or an \emph{oriented cycle}, respectively.

Let $D$ be a digraph. A vertex $v$ of $D$ is said to \emph{dominate} itself and each of its out-neighbors.  We say that a subset $S\subseteq V(D)$ is a \emph{dominating set} of $D$ if for every $x\in V(D)-S$ there exists $v\in S$ such that  $(v,x)$ is an arc in $D$. Equivalently, $S$ is a dominating set of $D$ if $V(D)=N^+[S]$. The minimum size of a dominating set of $D$ is the \emph{domination number} of $D$ and is denoted $\gamma(D)$.  Any dominating set of $D$ that has cardinality $\gamma(D)$ will be called a $\gamma(D)$-set. A vertex \emph{totally dominates} each of its out-neighbors.  A subset $S \subseteq V(D)$ is a \emph{total dominating set} of $D$ if each vertex $x \in V(D)$ is totally dominated by at least one vertex in $S$; that is, if $N_D^+(S)=V(D)$.  Note that a digraph admits a total dominating set only if its minimum in-degree is at least $1$.  If $\delta^-(D)\geq 1$, then the \emph{total domination number} of $D$ is the minimum cardinality of a total dominating set of $D$; the total domination number of $D$ is denoted $\gamma_t(D)$.

A subset $P$ of $V(D)$ is a \emph{packing} of $D$ if there are no arcs joining vertices of $P$ and for every two vertices $x,y\in P$ there does not exist $v\in V(D)$
such that $\{(v,x),(v,y)\} \subseteq A(D)$.  In other words, by definition no two vertices of a packing are dominated by a single vertex.  An equivalent way to specify this is that $P$ is a packing if and only if $N^-[x]\cap N^-[y]=\emptyset$, for every pair of distinct vertices $x,y \in P$.  The cardinality of a largest packing in $D$ is denoted $\rho(D)$, and is called the \emph{packing number} of $D$.  Thus, $\rho(D)$ is the cardinality of the largest subset $P$ of $V(D)$ such that the closed in-neighborhoods of vertices in $P$ are pairwise disjoint.  A subset $B$ of $V(D)$ is an \emph{open packing} of $D$ if $N_D^-(u) \cap N_D^-(v) = \emptyset$, for every pair of distinct vertices
$u$ and $v$ in $B$.  The \emph{open packing number}, $\opack(D)$, of $D$ is the maximum cardinality of an open packing in $D$.  For an undirected graph $G$ a subset $A \subseteq V(G)$ is a \emph{$2$-packing} in $G$ if the closed neighborhoods of the vertices in $A$ are pairwise disjoint.  The \emph{$2$-packing number} of $G$ is the cardinality, $\rho_2(G)$, of a largest $2$-packing in $G$. The \emph{open packing number} of an undirected graph $G$ is the maximum cardinality of a subset of vertices in $G$ no two of which share a common neighbor; it is denoted $\opack(G)$.

The \emph{Cartesian product} $G\cp H$ of digraphs $G$ and $H$ is the digraph whose vertex set is $V(G) \times V(H)$, and  $(g_1,h_1)$ is adjacent to  $(g_2,h_2)$ in $G \cp H$ if either $g_1=g_2$ and $(h_1,h_2)$ is an arc in $H$, or $h_1=h_2$ and $(g_1,g_2)$ is an arc in $G$. The digraphs $G$ and $H$ are called the \emph{factors} of the Cartesian product $G \cp H$.  For a vertex $g$ of $G$, the subdigraph of $G\cp H$ induced by the set $\{(g,h) :\, h\in V(H)\}$ is an \emph{$H$-fiber} and is denoted by $^g\!H$.  Similarly, for $h \in V(H)$, the \emph{$G$-fiber}, $G^h$, is the subdigraph induced by $\{(g,h) :\, g\in V(G)\}$.  Note that every $G$-fiber is isomorphic to the digraph $G$, and every $H$-fiber is isomorphic to the digraph $H$.  We also use the fiber notation to refer to the set of vertices in these subgraphs; the meaning should be clear from the context.
The \emph{direct product} of digraphs $G$ and $H$, denoted $G \times H$, has $V(G) \times V(H)$ as its vertex set.  If $(g_1,h_1)$ and $(g_2,h_2)$ are vertices in $G \times H$, then $(g_1,h_1)$ is adjacent to $(g_2,h_2)$ (equivalently, $(g_2,h_2)$ is adjacent from $(g_1,h_1)$) if $g_1g_2 \in A(G)$ and $h_1h_2 \in A(H)$.  The fibers of the direct
product $G \times H$ are defined  as they are for the Cartesian product, but in the direct product they are independent sets. We remark in passing that a few recent papers considered domination in digraph products concentrating on efficient domination~\cite{pgy-2019}, rainbow domination~\cite{hmwx} and digraph kernels~\cite{lv-2016}.

The following facts follow immediately from the definitions.
\begin{obs} \label{obs:immediate}
Let $G$ be any digraph.
\begin{enumerate}
\item Since $N_{\ugr{G}}[v]=N^+_G[v] \cup N^-_G[v]$ for every $v \in V(G)$, we infer that $\rho_2(\ugr{G})\le \rho(G)$.
\item Since $N_{\ugr{G}}(v)=N^+_G(v) \cup N^-_G(v)$ for every $v \in V(G)$, we infer that $\opack(\ugr{G}) \leq \opack(G)$.
\item Since every dominating set of $G$ intersects every closed in-neighborhood of $G$, we get $\gamma(G) \ge \rho(G)$.
\item If $\delta^-(G) \ge 1$, then $\gamma_t(G) \ge \opack(G)$ since any total dominating set of $G$ intersects every open in-neighborhood of $G$.
\item Since $N^+_G[S]\subseteq N_{\ugr{G}}[S]$ for every $S \subseteq V(G)$, it follows that $\gamma(G) \ge \gamma(\ugr{G})$.
\end{enumerate}
\end{obs}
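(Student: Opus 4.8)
The plan is to dispatch the five items using two elementary structural facts that fall out of the definitions. The first is that the in-neighborhoods sit inside the underlying-graph neighborhoods, namely $N^-_G[v]\subseteq N_{\ugr{G}}[v]$ and $N^-_G(v)\subseteq N_{\ugr{G}}(v)$ for every $v$, which is immediate since $N_{\ugr{G}}[v]=N^+_G[v]\cup N^-_G[v]$ and $N_{\ugr{G}}(v)=N^+_G(v)\cup N^-_G(v)$. The second is that a dominating (respectively, total dominating) set of $G$ must meet the closed (respectively, open) in-neighborhood of every vertex, because $G$ dominates through out-neighborhoods. From these, each inequality follows either by containment of packings or by a one-line counting argument.

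For items (1) and (2) I would argue by transporting a packing from $\ugr{G}$ to $G$. Starting from a maximum $2$-packing $A$ of $\ugr{G}$, the closed neighborhoods $N_{\ugr{G}}[x]$ for $x\in A$ are pairwise disjoint; since $N^-_G[x]\subseteq N_{\ugr{G}}[x]$, the closed in-neighborhoods $N^-_G[x]$ are then pairwise disjoint as well, so $A$ is a packing of $G$ and hence $\rho_2(\ugr{G})=|A|\le\rho(G)$. Item (2) is identical after replacing closed neighborhoods by open ones and using $N^-_G(x)\subseteq N_{\ugr{G}}(x)$ to conclude that a maximum open packing of $\ugr{G}$ is an open packing of $G$, giving $\opack(\ugr{G})\le\opack(G)$.

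Items (3) and (4) I would prove by the standard packing-versus-domination count. Fix a minimum dominating set $S$ and a maximum packing $P$ of $G$. Because $S$ dominates each $x\in P$, either $x\in S$ or some in-neighbor of $x$ lies in $S$, so in both cases $S\cap N^-_G[x]\ne\emptyset$. As the sets $N^-_G[x]$ for $x\in P$ are pairwise disjoint, the corresponding elements of $S$ are distinct, whence $\gamma(G)=|S|\ge|P|=\rho(G)$. For item (4), assuming $\delta^-(G)\ge1$ so that a total dominating set exists, a total dominating set $S$ meets $N^-_G(x)$ for every $x$, since total domination of $x$ requires an in-neighbor of $x$ to lie in $S$; the open packing condition again forces these representatives to be distinct, yielding $\gamma_t(G)\ge\opack(G)$.

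Finally, item (5) follows by pushing a dominating set forward to the underlying graph: if $S$ dominates $G$ then $N^+_G[S]=V(G)$, and since $N^+_G[S]\subseteq N_{\ugr{G}}[S]$ we obtain $N_{\ugr{G}}[S]=V(G)$, so $S$ dominates $\ugr{G}$ and $\gamma(\ugr{G})\le\gamma(G)$. I expect no genuine obstacle in any of these; the only point demanding care is the bookkeeping of orientation, since domination in $G$ is defined through out-neighborhoods while packings are defined through in-neighborhoods, so one must translate the covering condition on a dominating set into the correct intersection statement about in-neighborhoods before invoking disjointness.
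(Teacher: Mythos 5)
Your proposal is correct and follows exactly the route the paper intends: the paper offers only the parenthetical justifications embedded in the statement itself (containment of in-neighborhoods in underlying-graph neighborhoods for (1), (2), (5), and the fact that a (total) dominating set must meet every closed (open) in-neighborhood for (3), (4)), and you have simply written out those same one-line arguments in full. No discrepancies.
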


The following definitions refer only to undirected graphs $G$. A {\em clique} is a set of vertices in $G$ that induces a complete graph.  The maximum cardinality of an independent set of vertices in $G$ is the {\em independence number}, $\alpha(G)$, of $G$. The minimum cardinality of a partition of $V(G)$ into independent sets is the {\em chromatic number}, $\chi(G)$, of $G$. The maximum cardinality of a clique in $G$ is denoted by $\omega(G)$, and the minimum cardinality of a partition of $V(G)$ into cliques is denoted by $k(G)$. A graph $G$ is {\em perfect} if $\omega(H)=\chi(H)$ for every induced subgraph $H$ of $G$. By the {\bf Perfect Graph Theorem} of Lov\'{a}sz~\cite{lo-1972} this is equivalent to saying that $\alpha(H)=k(H)$ for every induced subgraph $H$ of $G$. In particular, the Perfect Graph Theorem implies that $G$ is perfect if and only if its complement $\overline{G}$ is perfect.

For a positive integer $n$ we denote the set of positive integers less than or equal to $n$ by $[n]$. If $G$ is a digraph or an undirected graph, then $n(G)$ will denote the order of $G$; that is, $n(G)=|V(G)|$.

\section{Packing and domination in graphs}
\label{sec:p-d-graphs}

It is clear that for an undirected graph $G$ it follows that $\gamma(G)$ is the minimum number of closed neighborhoods of vertices of $G$ whose union is $V(G)$.  For convenience we will say that this collection of closed neighborhoods \emph{covers} $G$.
For the undirected graph $G$ we define an undirected graph, the \emph{closed neighborhood graph} of $G$, denoted by $\cng(G)$ as follows.   The vertex set of
$\cng(G)$ is the set $V(G)$; two distinct vertices $u$ and $v$ in $V(G)$ are adjacent in $\cng(G)$ if and only if  $N_G[u] \cap N_G[v] \neq \emptyset$.  Note that $\cng(G)$
is isomorphic to $G^2$, the graph obtained from $G$ by adding an edge $uv$ for each pair of vertices with $d_G(u,v)=2$. It is known that the graph $G^2$, where $G$ is a tree, is a chordal graph, and thus a perfect graph~\cite{ha-mc-1994}.

An undirected graph $G$ is a {\em neighborhood Helly graph} if the closed neighborhoods $N_G[x]$ of vertices $x\in V(G)$ enjoy the {\em Helly property}. That is, any collection of closed neighborhoods in $G$, which are pairwise intersecting, have a common intersection; see~\cite{dps-2009,dragan}. It is well-known that trees are Helly graphs, which are in turn neighborhood Helly graphs.

The following statement follows directly from the definitions.
If $G$ is a graph and $S \subseteq V(G)$, then $S$ is a $2$-packing in $G$ if and only if $S$ is an independent set in $\cng(G)$.  Hence we have,

\begin{obs} \label{obs:graphsimmediate}
If $G$ is an undirected graph, then $\rho_2(G)=\alpha(\cng(G))$.
\end{obs}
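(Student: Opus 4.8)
The plan is to verify that the 2-packings of $G$ and the independent sets of $\cng(G)$ are literally the same family of subsets of $V(G)$, from which the equality of the two extremal parameters is immediate. First I would unpack the definition of a 2-packing: a set $S \subseteq V(G)$ is a 2-packing precisely when the closed neighborhoods $N_G[u]$, $u \in S$, are pairwise disjoint, that is, $N_G[u] \cap N_G[v] = \emptyset$ for every pair of distinct vertices $u,v \in S$.

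Next I would recall the adjacency rule defining $\cng(G)$: two distinct vertices $u$ and $v$ are adjacent in $\cng(G)$ exactly when $N_G[u] \cap N_G[v] \neq \emptyset$. Negating this, $u$ and $v$ are non-adjacent in $\cng(G)$ if and only if $N_G[u] \cap N_G[v] = \emptyset$. Hence a set $S$ is independent in $\cng(G)$ — meaning no two of its members are adjacent — if and only if $N_G[u] \cap N_G[v] = \emptyset$ for every pair of distinct $u,v \in S$. This is verbatim the condition from the previous paragraph, so $S$ is a 2-packing in $G$ if and only if $S$ is an independent set in $\cng(G)$, which is exactly the equivalence stated just before the observation.

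Finally, since the collection of 2-packings of $G$ coincides, as a family of subsets of $V(G)$, with the collection of independent sets of $\cng(G)$, a set of maximum cardinality in one family is also of maximum cardinality in the other. Taking maxima over both families therefore yields $\rho_2(G) = \alpha(\cng(G))$, as claimed.

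There is essentially no obstacle here: the statement is purely definitional, and the only point to check is that the pairwise-disjointness condition defining a 2-packing is precisely the negation of the adjacency condition defining $\cng(G)$. No case analysis, no Helly-type argument, and no appeal to any tree structure is required; those tools enter only in the later results that are built on top of this observation.
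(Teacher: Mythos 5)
Your proof is correct and matches the paper's treatment: the paper likewise notes, just before the observation, that $S$ is a $2$-packing in $G$ if and only if $S$ is independent in $\cng(G)$, and then reads off the equality of the two maxima. Nothing further is needed.
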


We now give a proof of Theorem~\ref{thm:mm} that is different in approach from that of Meir and Moon in~\cite{mm-1975}.
\medskip

\noindent \textbf{Theorem~\ref{thm:mm}} \emph{
If $T$ is any tree, then $\rho_2(T)=\gamma(T)$.
}
\begin{proof}
Let $C=\{x_1, \ldots,x_m\}$ be a clique of $\cng(T)$. Therefore, the closed neighborhoods $N_T[x_1],\ldots,N_T[x_m]$ are pairwise intersecting, and since trees are Helly graphs, we infer that $N_T[x_1]\cap\cdots\cap N_T[x_m]\ne \emptyset$. Thus there exists $w\in N_T[x_1]\cap\cdots\cap N_T[x_m]$. We infer that $C\subseteq N_T[w]$.
Since $\cng(T)=T^2$ is a chordal graph, and hence a perfect graph, we conclude by the Perfect Graph Theorem that the complement $\overline{\cng(T)}$ is also a perfect graph.  In particular,  $\omega(\overline{\cng(T)})=\chi(\overline{\cng(T)})$.  Now, using Observation~\ref{obs:graphsimmediate}, we get
\[\rho_2(T)=\alpha(\cng(T))=\omega(\overline{\cng(T)})=\chi(\overline{\cng(T)})=\gamma(T)\,.\]
The last equality follows since $\chi(\overline{\cng(T)})$ is the minimum number of independent sets of $\overline{\cng(T)}$ that cover $\overline{\cng(T)}$.  This minimum number of  independent sets of $\overline{\cng(T)}$ that cover $\overline{\cng(T)}$ is the same as the minimum number of cliques that cover $\cng(T)$, which in turn is the minimum number of closed neighborhoods of $T$ that cover $T$.
\end{proof}

\section{Packing and domination in digraphs} \label{sec:p-d-digraphs}
Similarly as remarked for undirected graphs in Section~\ref{sec:p-d-graphs}, we note that the domination number of a digraph $D$ is the minimum number of closed out-neighborhoods of vertices of $D$ whose union is $V(D)$. As is the case for undirected graphs, we say this collection of closed out-neighborhoods \emph{covers} $D$.
For a digraph $D$ we define an undirected graph, the \emph{closed in-neighborhood graph} of $D$, denoted by $\cing(D)$.  The vertex set of $\cing(D)$ is $V(D)$ and two vertices of $\cing(D)$ are adjacent if and only if $N^-_D[u] \cap N^-_D[v] \neq \emptyset$.  We note that $S$ is a packing in $D$ if and only if $S$ is independent in $\cing(D)$. This yields the digraph analogue to Observation~\ref{obs:graphsimmediate}:

\begin{obs} \label{obs:immediatedigraph}
If $D$ is a digraph, then $\rho(D)=\alpha(\cing(D))$.
\end{obs}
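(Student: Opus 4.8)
The plan is to argue directly from the definitions, exactly mirroring the proof of Observation~\ref{obs:graphsimmediate} in the undirected setting. The entire content of the statement is the recognition that ``being a packing in $D$'' and ``being an independent set in $\cing(D)$'' describe the very same collection of vertex subsets; once that correspondence is made precise, taking the maximum cardinality on each side immediately yields $\rho(D)=\alpha(\cing(D))$. So I would not attempt anything clever here, and indeed I expect there to be no genuine obstacle: the equality is definitional, and the only care required is in unwinding the two definitions cleanly.

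First I would recall the reformulated definition of a packing established in Section~\ref{sec:defnot}: a set $P\subseteq V(D)$ is a packing of $D$ if and only if $N^-_D[x]\cap N^-_D[y]=\emptyset$ for every pair of distinct vertices $x,y\in P$. This reformulation already absorbs both conditions in the original definition (no arcs joining vertices of $P$, and no single vertex dominating two members of $P$) into the single statement that the closed in-neighborhoods of the vertices of $P$ are pairwise disjoint.

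Next I would unwind the definition of $\cing(D)$: its vertex set is $V(D)$, and two distinct vertices $x$ and $y$ are adjacent in $\cing(D)$ precisely when $N^-_D[x]\cap N^-_D[y]\neq\emptyset$. Taking the contrapositive, distinct vertices $x$ and $y$ are \emph{non-adjacent} in $\cing(D)$ if and only if $N^-_D[x]\cap N^-_D[y]=\emptyset$. Comparing this with the packing condition from the previous step shows that $P$ is a packing of $D$ if and only if every pair of distinct vertices of $P$ is non-adjacent in $\cing(D)$, which is exactly the condition that $P$ be an independent set in $\cing(D)$.

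Finally I would combine these observations. Since the packings of $D$ are precisely the independent sets of $\cing(D)$, a largest packing of $D$ is a largest independent set of $\cing(D)$, and these two extremal families have the same maximum cardinality. Hence $\rho(D)=\alpha(\cing(D))$, as claimed. The proof is therefore a one-line consequence of the two definitions, and the only thing worth flagging is that one must invoke the equivalent ``pairwise disjoint closed in-neighborhood'' characterization of packings rather than the two-part original definition, so that the match with adjacency in $\cing(D)$ is transparent.
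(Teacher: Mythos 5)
Your proposal is correct and matches the paper's treatment: the paper likewise notes that $S$ is a packing in $D$ if and only if $S$ is independent in $\cing(D)$, using the pairwise-disjoint closed in-neighborhood characterization, and then takes maxima. No substantive difference.
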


We now prove a lemma in which  we restrict to digraphs whose underlying graph has no cycle of length less than $7$.  This lemma will then lead to a proof of
Theorem~\ref{thm:packing-dom-ditrees} that uses essentially the same approach as the proof of Theorem~\ref{thm:mm}.

\begin{lem} \label{lem:dicliques}
Let $D$ be a digraph such that $\ugr{D}$ has girth at least $7$.  If $K$ is a maximal clique of $\cing(D)$, then there is a vertex $w$ in $D$ such that
$K \subseteq N_D^+[w]$.
\end{lem}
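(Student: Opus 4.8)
The plan is to recognize that the desired conclusion is exactly a Helly-type statement for the closed in-neighborhoods. First I would record the translation $K \subseteq N_D^+[w]$ if and only if $w \in \bigcap_{x \in K} N_D^-[x]$, which holds because $x \in N_D^+[w]$ is the same condition as $w \in N_D^-[x]$. Moreover, a set $K$ is a clique of $\cing(D)$ precisely when the closed in-neighborhoods $\{N_D^-[x] : x \in K\}$ are pairwise intersecting. Thus the lemma is equivalent to the assertion that a pairwise-intersecting family of closed in-neighborhoods in $D$ has a common element; maximality of $K$ in fact plays no role and may be ignored.

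Second, I would argue by a minimal-counterexample (Helly-number) argument. Suppose $\bigcap_{x\in K} N_D^-[x] = \emptyset$, and choose a minimal subfamily $N_D^-[x_1], \ldots, N_D^-[x_k]$ with empty intersection; since the family is pairwise intersecting, $k \ge 3$. Minimality yields, for each $i$, a vertex $p_i \in \bigcap_{j \ne i} N_D^-[x_j]$ with $p_i \notin N_D^-[x_i]$. I would then retain only $x_1,x_2,x_3$ and the three witnesses $p_1,p_2,p_3$; the defining properties give $p_1 \in N_D^-[x_2]\cap N_D^-[x_3]$ with $p_1 \notin N_D^-[x_1]$, and cyclically for $p_2,p_3$. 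Note that $p_i \neq x_i$ (as $x_i \in N_D^-[x_i]$), the $x_i$ are distinct, and the $p_i$ are pairwise distinct (if, say, $p_1 = p_2$, this common vertex would lie in $N_D^-[x_2]$ by the first property and outside it by the second).

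Third---the geometric heart---I would translate each membership $p_i \in N_D^-[x_j]$ into either an identity $p_i = x_j$ or an arc $p_i \to x_j$, hence an edge $p_ix_j$ of $\ugr{D}$. In the generic case where all six vertices are distinct, the edges $x_1p_3,\, p_3x_2,\, x_2p_1,\, p_1x_3,\, x_3p_2,\, p_2x_1$ form a $6$-cycle in $\ugr{D}$, forcing girth at most $6$; this is exactly where the threshold $7$ enters. It then remains to treat the degenerate cases in which some $p_i$ equals some $x_j$ with $j \ne i$. Each such identification collapses the hexagon to a shorter cycle: for example, $p_1 = x_2$ turns $p_1 \in N_D^-[x_3]$ into the edge $x_2x_3$, and one checks that a triangle, a $4$-cycle, or a $5$-cycle always survives. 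In every case $\ugr{D}$ contains a cycle of length at most $6$, contradicting girth at least $7$; hence no such counterexample exists and the common vertex $w$ is found.

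I would flag the final step as the principal bookkeeping obstacle: the exhaustive but routine verification that every coincidence pattern still leaves a cycle of length at most $6$, together with confirming that the all-distinct configuration yields precisely a hexagon (so that $7$, and not a smaller value, is the correct girth hypothesis).
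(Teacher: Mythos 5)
Your proof is correct, and it takes a genuinely different route from the paper's. You reformulate the lemma as the Helly property for the family $\{N_D^-[x] : x \in K\}$ and run the classical Helly-number argument: a minimum-size subfamily with empty intersection has size $k\ge 3$, the three witnesses $p_1,p_2,p_3$ (pairwise distinct, with $p_i\ne x_i$) together with $x_1,x_2,x_3$ yield a closed walk of length $6$ in $\ugr{D}$, and each admissible coincidence $p_i=x_j$ merely identifies two consecutive vertices of that hexagon, so a cycle of length between $3$ and $6$ always survives --- contradicting girth at least $7$. The paper instead first shows $|N_D^-[u]\cap N_D^-[v]|\le 2$, proves that every triple of clique vertices induces either $0$ or $2$ edges in $\ugr{D}$, and splits into the case where $K$ induces a star in $\ugr{D}$ (so $w$ is its center) and the case where all triples are independent, where the common in-neighbor $a$ is forced into $K$ by \emph{maximality}, a contradiction. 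A notable payoff of your argument is that it never uses maximality of $K$, so it establishes the stronger statement that the conclusion holds for every clique of $\cing(D)$ (consistent with the paper's abstract, which advertises the result as a Helly property, and with the open-neighborhood analogue, Lemma~\ref{lem:dicliquesopen}, which is stated without maximality); the price is the finite but unavoidable check of the coincidence patterns, which you correctly identify and which does go through.
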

\begin{proof}

First we claim that $|N_D^-[u] \cap N_D^-[v]| \leq 2$, for any two distinct vertices $u$ and $v$ of $G$.  If $uv \in A(D)$ or $vu \in A(D)$  and there exists $x \in (N_D^-[u] \cap N_D^-[v])-\{u,v\}$, then $\{x,u,v\}$ induces a triangle in $\ugr{D}$, which is a contradiction.  On the other hand, if neither $uv$ nor $vu$ is an arc in $D$, then $|N_D^-[u] \cap N_D^-[v]| \leq 1$.  For if $y$ and $z$ are distinct and $\{y,z\} \subseteq N_D^-[u] \cap N_D^-[v]$, then the subgraph of $\ugr{D}$ induced by $\{u,v,y,z\}$ contains a $4$-cycle, which is again a contradiction.

Let $K=\{x_1, \ldots,x_m\}$ be a maximal complete subgraph of $\cing(D)$.  Suppose first that $m=2$.  Since $x_1x_2$ is an edge in $\cing(D)$, it follows that there exists a vertex $w \in N_D^-[x_1] \cap N_D^-[x_2]$.  This implies that $K=\{x_1,x_2\}\subseteq N_D^+[w]$.  Thus, we now assume that $m \ge 3$.  Let $i,j$ and $k$ be any three distinct indices in $[m]$.
The vertices $x_i,x_j$ and $x_k$ are pairwise adjacent in $\cing(D)$, which implies that there exist (not necessarily distinct) vertices $a$, $b$ and $c$ in $G$  such that
\[ a\in N_D^-[x_i] \cap N_D^-[x_j], \hskip .5cm b\in N_D^-[x_j] \cap N_D^-[x_k], \text{    and    } c\in N_D^-[x_i] \cap N_D^-[x_k]\,.\]
 Let $e$ be the number of edges in the subgraph of $\ugr{D}$ induced by
$\{x_i,x_j,x_k\}$.  We claim that $e \in \{0,2\}$.  Since the girth of $\ugr{D}$ is at least $7$, it is clear that $e\neq 3$.  Suppose $e=1$.  Without loss of generality we assume that $x_ix_j \in A(D)$.  In this case it follows that $b \neq c$ because $\ugr{D}$ has no triangles.  Since $|N_D^-[x_i] \cap N_D^-[x_j]| \leq 2$, it follows that $a\in \{x_i,x_j\}$. However, this now implies that $\{x_i,x_j,x_k,b,c\}$ induces a subgraph in $\ugr{D}$ that contains a $5$-cycle, which is a contradiction.  Therefore, $e=0$ or $e=2$.  That is, for any three vertices from $K$, the subgraph of $\ugr{D}$ they induce has either zero or two edges.

Suppose first that there exists  three distinct vertices in $K$ that induce a $P_3$ in $\ugr{D}$.  Without loss of generality suppose $x_1x_2$ and $x_1x_3$ are edges in $\ugr{D}$.  Since $x_2x_3 \in E(\cing(D))$, there exists $b \in N_D^-[x_2]\cap N_D^-[x_3]$. If $b \not\in \{x_1, x_2, x_3\}$, then $\ugr{D}$ contains a $4$-cycle. Thus, $b \in \{x_1, x_2, x_3\}$. However, $b \not\in \{x_2, x_3\}$ for otherwise $\ugr{D}$ contains a triangle. Thus, $x_1x_2 \in A(D)$ and $x_1x_3\in A(D)$.  If $m=3$, then we have established the lemma by setting $w=x_1$  since $K=\{x_1,x_2,x_3\}\subseteq N_D^+[x_1]$.  Suppose then that $m>3$.  Let $t \in [m]-\{1,2,3\}$ and consider the set $\{x_1,x_2,x_t\}$.  By the above we infer that this set induces a subgraph of $\ugr{D}$ that contains exactly two edges.  Suppose $x_tx_2 \in A(D)$ or $x_2x_t \in A(D)$.  The girth of $\ugr{D}$ is at least $7$, and this implies $x_tx_3 \notin E(\ugr{D})$.
However, $x_tx_3 \in E(\cing(D))$, and hence there exists a vertex $d\in (N_D^-[x_t] \cap N_D^-[x_3])-\{x_t,x_3\}$.  This implies that the subgraph of $\ugr{D}$ induced by $\{x_1,x_2,x_3,x_t,d\}$ contains a $5$-cycle, which is a contradiction.  But this means that $x_tx_1 \in A(D)$ or $x_1x_t \in A(D)$.  Therefore, if $K$ contains a triple that induces a $P_3$ in $\ugr{D}$, then $K$ induces a star in  $\ugr{D}$.  Let $2 \le i<j\le m$.  Since $x_ix_j \in E(\cing(D))$, there exists $b \in N_D^-[x_i]\cap N_D^-[x_j]$.
We know that $b \not\in \{x_2, \dots, x_m\}$, for otherwise $\ugr{D}$ contains a $3$- or $4$-cycle. On the other hand, if $b \not\in \{x_1, \dots, x_m\}$, then $\ugr{D}$ contains a $4$-cycle. Thus, $b = x_1$ and $x_1x_i \in A(D)$ for $2 \le i \le m$.  That is, $x_i \in N_D^+[x_1]$, for every $i \in [n]$.  Setting $w = x_1$ establishes the lemma.

Finally, suppose every subset of $K$ of cardinality $3$ is an independent set in $\ugr{D}$.  We treat $\{x_i,x_j,x_k\}$ using the notation set out in the second paragraph of this proof. If $|\{a,b,c\}|=3$, then $\ugr{D}$ contains the $6$-cycle $x_i,a,x_j,b,x_k,c,x_i$, which is a contradiction.  If $|\{a,b,c\}|=2$, say $a=b$, then $a,x_i,c,x_k,a$ is a cycle in $\ugr{D}$, which is again a contradiction.  Therefore, $a=b=c$.  The set $S=\{a,x_i,x_j,x_k\}$ induces a star in $\ugr{D}$  with center $a$, and $S$ induces a complete subgraph of $\cing(D)$.  Since $K$ is a maximal complete subgraph of $\cing(D)$, we conclude that if $m=3$, then $\{x_1,x_2,x_3\}$ is not independent in $\ugr{D}$.
By the previous case we conclude that $e=2$ and the conclusion of the lemma holds.  Thus we assume $m \ge 4$. If we can show that $a \in \cap_{s=1}^m N_{\ugr{D}}[x_s]$, then by
the maximality of $K$ it will follow that $a\in K$.  Let $t \in [m]-\{x_i,x_j,x_k\}$ such that $x_t \neq a$ and again consider the set $\{x_i,x_j,x_t\}$, which by assumption is independent in $\ugr{D}$.  Suppose $ax_t\notin E(\ugr{D})$.  Since $x_ix_t$ and $x_jx_t$ are edges in $\cing(D)$ it follows that there exist $u \in (N_D^-[x_t] \cap N_D^-[x_i])-\{x_t,x_i\}$ and $v \in (N_D^-[x_t] \cap N_D^-[x_j])-\{x_t,x_j\}$.  This implies that $a,x_i,u,x_t,v,x_j,a$ is a $6$-cycle in $\ugr{D}$.  This is a contradiction, and hence $ax_t \in E(\ugr{D})$.  Since $t$ was arbitrary, we conclude that $a \in K$.  Consequently, it is not possible that every subset of $K$ having cardinality $3$ is independent in $\ugr{D}$.  This contradiction finishes the proof.
\end{proof}

We are now able to prove that in any ditree the domination number and the packing number are equal.  Suppose $T$ is any ditree.  Recall that the domination number is the cardinality of a smallest subset $S$ of $V(T)$ such that the  closed out-neighborhoods of the vertices in $S$ cover $T$.  The packing number of $T$ is the cardinality of a largest subset $P$ of $V(T)$ such that the closed in-neighborhoods of vertices in $P$ are pairwise disjoint.

\bigskip

\noindent \textbf{Theorem~\ref{thm:packing-dom-ditrees}} \emph{
For any ditree $T$, $\rho(T) = \gamma(T)$.
}

\begin{proof}
Let $T$ be a ditree.
We claim that $\cing(T)$ is a chordal graph.  Suppose $C: v_1,v_2,\ldots,v_n,v_1$ is a chordless cycle in $\cing(T)$ for some $n \ge 4$.  We compute  subscripts modulo $n$.
For each $i \in [n]$ the edge $v_iv_{i+1}$ is in  $\cing(T)$.  This implies that $N_T^-[v_i] \cap N_T^-[v_{i+1}] \neq \emptyset$.  Let $a_i \in N_T^-[v_i] \cap N_T^-[v_{i+1}]$.  That is, either $\{v_iv_{i+1}, v_{i+1}v_i\} \cap A(T) \neq \emptyset$ (in which case $a_i=v_i$ or $a_i=v_{i+1}$) or there exists $a_i \in V(T)-\{v_i,v_{i+1}\}$ such that $a_iv_i$ and $a_iv_{i+1}$ are both arcs of $T$.  Since $C$ is chordless, any two $a_r, a_s$ that do not belong to $C$ are distinct whenever $r\ne s$.  However, this means that $\ugr{T}$ contains a cycle, which is a contradiction.  Therefore, $\cing(T)$ is a chordal graph and hence is a perfect graph.  By the Perfect Graph Theorem of Lov\'{a}sz, we conclude that the complement $\overline{\cing(T)}$ is also a perfect graph.  In particular,
$\omega(\overline{\cing(T)})=\chi(\overline{\cing(T)})$.  Now we get
\[\rho(T)=\alpha(\cing(T))=\omega(\overline{\cing(T)})=\chi(\overline{\cing(T)})=\gamma(T)\,.\]
The last equality follows since $\chi(\overline{\cing(T)})$ is the minimum number of independent sets of $\overline{\cing(T)}$ that cover $\overline{\cing(T)}$.  This minimum number of  independent sets of $\overline{\cing(T)}$ that cover $\overline{\cing(T)}$ is the same as the minimum number of maximal cliques that cover $\cing(T)$, which in turn, by Lemma~\ref{lem:dicliques} is the minimum number of closed out-neighborhoods of $T$ that cover $T$.
\end{proof}

We conclude this section with the following question.

\begin{prob} \label{prob:acyclic}
Let $G$ be an acyclic digraph (that is, there are no directed cycles in $G$). Is it true that $\rho(G)=\gamma(G)$?
\end{prob}

\section{Open packing and total domination in digraphs} \label{sec:op-td-digraphs}

In this section we explore the relationship between open packings and total domination in digraphs.  We will thus assume that all digraphs under discussion have minimum
in-degree at least $1$.  First we note that the total domination number of a digraph $D$ is the minimum number of open out-neighborhoods of vertices of $D$ whose union is $V(D)$. As in Section~\ref{sec:p-d-digraphs}, we say this collection of open out-neighborhoods \emph{covers} $D$.  For a digraph $D$ we define a second undirected graph, the \emph{open in-neighborhood graph} of $D$, denoted by $\oing(D)$.  The vertex set of $\oing(D)$ is $V(D)$ and $E(\oing(D))=\{uv :\, N_D^-(u) \cap N_D^-(v) \neq \emptyset \}$. We make the following observation that follows directly from the definitions.

\begin{obs} \label{obs:immediatedigraphopen}
If $D$ is a digraph, then $\opack(D)=\alpha(\oing(D))$.
\end{obs}

\begin{lem} \label{lem:dicliquesopen}
Let $D$ be a digraph such that $\ugr{D}$ has girth at least $7$.  If $K$ is a clique in $\oing(D)$, then there is a vertex $w$ in $D$ such that
$K \subseteq N_D^+(w)$.
\end{lem}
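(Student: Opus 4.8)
The plan is to mirror the approach of Lemma~\ref{lem:dicliques}, but to exploit the fact that open in-neighborhoods exclude the vertex itself, which makes the structure considerably cleaner and, in particular, removes the need to assume that $K$ is maximal. Throughout, write $K=\{x_1,\dots,x_m\}$ and recall that, by the standing assumption of this section, $\delta^-(D)\ge 1$.

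First I would prove a uniqueness statement: for any two distinct vertices $u,v$ of $D$ one has $|N_D^-(u)\cap N_D^-(v)|\le 1$. Indeed, two distinct common in-neighbors $z_1,z_2$ would each be adjacent to both $u$ and $v$ in $\ugr{D}$, and since $z_1,z_2\notin\{u,v\}$ (open in-neighborhoods are irreflexive) the four distinct vertices $z_1,u,z_2,v$ would span a $4$-cycle $z_1uz_2v$ in $\ugr{D}$, contradicting girth at least $7$. Next I would observe that every edge $x_ix_j$ of $\oing(D)$ corresponds to a \emph{non}-edge of $\ugr{D}$: a common in-neighbor $a$ of $x_i$ and $x_j$ satisfies $a\notin\{x_i,x_j\}$, so if in addition $x_ix_j\in E(\ugr{D})$ then $\{x_i,x_j,a\}$ would induce a triangle. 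Since $K$ is a clique of $\oing(D)$, it follows that $K$ is an independent set in $\ugr{D}$, and moreover the (now unique) common in-neighbor $a_{ij}$ of each pair $x_i,x_j$ lies outside $K$, since otherwise $a_{ij}=x_k$ would be adjacent in $\ugr{D}$ to $x_i$, again contradicting independence.

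The heart of the argument is a triple analysis. For distinct indices $i,j,k$ set $a=a_{ij}$, $b=a_{jk}$, $c=a_{ik}$; each lies outside $K$. If $a,b,c$ are pairwise distinct, then $x_i,a,x_j,b,x_k,c$ are six distinct vertices spanning the $6$-cycle $x_i\,a\,x_j\,b\,x_k\,c\,x_i$ in $\ugr{D}$, which is impossible under girth at least $7$. Hence at least two of $a,b,c$ coincide; say $a=b$. Then this common vertex is an in-neighbor of both $x_i$ and $x_k$, so by the uniqueness statement it must equal $c$ as well. Therefore $a=b=c$, i.e.\ a single vertex is an in-neighbor of each of $x_i,x_j,x_k$.

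Finally I would globalize. For $m\le 2$ the claim is immediate: take a common in-neighbor of the two vertices, or any in-neighbor when $m=1$, which exists because $\delta^-(D)\ge1$. For $m\ge3$, put $w=a_{12}$; applying the triple conclusion to each set $\{x_1,x_2,x_k\}$ with $k\ge3$ forces $a_{1k}=a_{12}=w$, whence $w$ is an in-neighbor of $x_k$. As $w$ is also an in-neighbor of $x_1$ and $x_2$, we conclude that $K\subseteq N_D^+(w)$, as desired. I expect the only delicate point to be the triple analysis, specifically verifying that the six vertices of the putative $6$-cycle are genuinely distinct; this is precisely where the girth hypothesis is needed, and where the preliminary facts that each $a_{ij}\notin K$ and that common in-neighbors are unique do the essential bookkeeping.
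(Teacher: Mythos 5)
Your proposal is correct and takes essentially the same route as the paper's proof: both reduce to a triple analysis, exclude three distinct common in-neighbors via a forbidden $6$-cycle in $\ugr{D}$, conclude that each triple has a single common in-neighbor, and then glue over the sets $\{x_1,x_2,x_k\}$ to get one vertex $w$ with $K\subseteq N_D^+(w)$. The only difference is organizational: you isolate the uniqueness of common open in-neighbors (and the independence of $K$ in $\ugr{D}$) as preliminary facts, where the paper handles the corresponding coincidences with ad hoc triangle and $4$-cycle arguments.
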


\begin{proof}
Let $K=\{x_1,\ldots,x_m\}$ be a complete subgraph of $\oing(D)$.  By definition, $N_D^-(x_i) \cap N_D^-(x_j) \neq \emptyset$ for every pair of indices $i$ and $j$ in $[m]$.
Suppose first that $m=2$.  In this case for any $w \in N_D^-(x_1) \cap N_D^-(x_2)$, we have $wx_1$ and $wx_2$ are arcs in $D$, and thus $K \subseteq N_D^+(w)$.  Now, assume
that $m \ge 3$.  There exist (not necessarily distinct) vertices $a,b,c$ such that $a \in N_D^-(x_1) \cap N_D^-(x_2)$, $b \in N_D^-(x_2) \cap N_D^-(x_3)$, and
$c \in N_D^-(x_3) \cap N_D^-(x_1)$.  That is, $\{ax_1,ax_2,bx_2,bx_3,cx_3,cx_1\} \subseteq A(D)$. Suppose that $\{a,b,c\} \cap \{x_1,x_2,x_3\} \neq \emptyset$.  Without loss of
generality assume that $a \in \{x_1,x_2,x_3\}$.  Since $D$ has no loops, it follows that $a=x_3$. It now follows that $\{b,x_2,x_3\}$ induces a triangle in $\ugr{D}$, which is a
contradiction.  Therefore, $\{a,b,c\} \cap \{x_1,x_2,x_3\} = \emptyset$.  If $a,b$ and $c$ are three distinct vertices, then the subgraph of $\ugr{D}$ induced by
$\{a,b,c,x_1,x_2,x_3\}$ contains a cycle of length at most $6$.  With no loss of generality we assume then that $a=b$.  This implies that the digraph $D$ contains all of the arcs $ax_1, ax_2$ and $ax_3$.  If, in addition, $b \neq c$, then $\ugr{D}$ contains the $4$-cycle $a,x_1,c,x_3,a$, which is a contradiction.  We conclude that $a=b=c$, and
hence $\{x_1,x_2,x_3\} \subseteq N_D^+(a)$.  If $m\ge 4$, then for any $i \in [m]-[3]$ consider the subset $\{x_1,x_2,x_i\} \subseteq K$.  Using
the same argument given above for $\{x_1,x_2,x_3\}$, we can conclude that $\{x_1,x_2,x_i\} \subseteq N_D^+(a')$ for some $a'\in V(D)$.  This implies that $a=a'$, for otherwise
$\ugr{D}$ contains the $4$-cycle $a,x_1,a',x_2,a$.  Therefore, $K \subseteq N_D^+(a)$.
\end{proof}

With Lemma~\ref{lem:dicliquesopen} in hand we are now able to prove a result for ditrees that is analogous to the result of Rall~\cite[Lemma 10]{ra-2005} that showed every nontrivial (undirected) tree has open packing number equal to its total domination number.  We need to assume the ditrees in our result have minimum in-degree at least $1$, which is required in order to have a total dominating set.  We use the fact that the total domination number of such a ditree $T$ is the minimum number of open out-neighborhoods in $T$  that cover $V(T)$.  By Lemma~\ref{lem:dicliquesopen} this is the same as the minimum number of complete subgraphs of $\oing(T)$ that cover $V(T)$.

\bigskip

\noindent \textbf{Theorem~\ref{thm:openpacking-tdom-ditrees}} \emph{
If $T$ is a ditree such that $\delta^-(T) \ge 1$, then $\opack(T) = \gamma_t(T)$.
}
\begin{proof}
Let $T$ be a ditree such that $\delta^-(T) \ge 1$.  By Observation~\ref{obs:immediate}, we have $\gamma_t(T) \ge \opack(T)$.  We claim that the chromatic number and the clique number of the complement of the open in-neighborhood graph $\oing(T)$ are equal.  To verify this we will first show that $\oing(T)$ is a chordal graph.  Suppose
$C: v_1,v_2,\ldots,v_n,v_1$ is a chordless cycle in $\oing(T)$ for some $n\ge 4$.  We compute subscripts of the vertices in $C$ modulo $n$.  For each $i \in [n]$, there exists
a vertex $a_i\in N_T^-(v_i) \cap N_T^-(v_{i+1})$ since $v_iv_{i+1}$ is an edge in $\oing(T)$.  This yields a closed walk $W:v_1,a_1,v_2,\ldots,v_i,a_i,v_{i+1},\ldots,a_n,v_1$
in $\ugr{T}$.  Suppose that $a_s=a_t$ for some $1 \le s <t \le n$.  If $t-s=1$, then $\{a_sv_s,a_sv_{s+1},a_sv_{s+2}\} \subseteq A(T)$.  This implies that $v_sv_{s+2}$ is a
chord of $C$, which is a contradiction.  On the other hand, suppose that $2 \le t-s < n-1$.  In this case $\{a_sv_s,a_sv_{s+1},a_sv_t,a_sv_{t+1}\} \subseteq A(T)$, which implies
that $v_sv_t$ is a chord of $C$.  We conclude that $W$ is a cycle in $\ugr{T}$, which is a contradiction.  Therefore, $\oing(T)$ is a chordal graph and thus is a perfect graph.
The Perfect Graph Theorem implies that $\overline{\oing(T)}$ is also a perfect graph.

Using Lemma~\ref{lem:dicliquesopen} and in a manner similar to the proof of Theorem~\ref{thm:packing-dom-ditrees}, we get
\[\opack(T)=\alpha(\oing(T))=\omega(\overline{\oing(T)})=\chi(\overline{\oing(T)})=\gamma_t(T)\,.\]
\end{proof}

It is easy to see that $\gamma_t(G \times H) \le \gamma_t(G)\gamma_t(H)$, for any pair of digraphs $G$ and $H$ that both have minimum in-degree at least $1$.  This follows
from the fact that if $A$ and $B$ are minimum total dominating sets of $G$ and $H$, respectively, then $A \times B$ totally dominates $G \times H$.  In addition, for such
a pair of digraphs, it follows from the definitions that
$\gamma_t(G \times H) \ge \max\{\opack(G)\gamma_t(H),\opack(H)\gamma_t(G)\}$.
We can now use Theorem~\ref{thm:openpacking-tdom-ditrees} to specify many instances of pairs of digraphs that achieve equality in the above.  This is the digraph version of
the result of Rall~\cite[Theorem 7]{ra-2005} for undirected graphs.

\bigskip

\noindent \textbf{Theorem~\ref{thm:equaltotalfordirect}} \emph{
If $G$ is a digraph with $\delta^-(G) \ge 1$ such that $\opack(G)=\gamma_t(G)$, then $\gamma_t(G \times H)=\gamma_t(G)\gamma_t(H)$, for every digraph $H$ with minimum in-degree at least $1$.
}

\begin{proof}
Suppose $G$ satisfies the hypothesis of the theorem and let $H$ be a digraph with $\delta^-(H) \ge 1$.  Now,
\[\gamma_t(G)\gamma_t(H) \ge \gamma_t(G \times H) \ge \max\{\opack(G)\gamma_t(H),\opack(H)\gamma_t(G)\} \ge \opack(G)\gamma_t(H)=\gamma_t(G)\gamma_t(H)\,.\]
\end{proof}

The next result follows immediately by applying Theorem~\ref{thm:openpacking-tdom-ditrees} and Theorem~\ref{thm:equaltotalfordirect}.

\begin{cor} \label{cor:equaltyditreetdomdirect}
If $T$ is any ditree with minimum in-degree at least $1$ and $H$ is any digraph with $\delta^-(H) \ge 1$, then
\[\gamma_t(T \times H)=\gamma_t(T)\gamma_t(H)\,.\]
\end{cor}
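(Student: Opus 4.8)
The plan is to obtain this equality as a direct consequence of the two preceding theorems, since $T$ is exactly the kind of factor to which Theorem~\ref{thm:equaltotalfordirect} applies. First I would observe that $T$, being a ditree with $\delta^-(T)\ge 1$, satisfies the hypothesis of Theorem~\ref{thm:openpacking-tdom-ditrees}; applying that theorem yields $\opack(T)=\gamma_t(T)$. This is precisely the condition required of the first factor in Theorem~\ref{thm:equaltotalfordirect}, namely that the factor have minimum in-degree at least $1$ and satisfy $\opack=\gamma_t$.

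With that equality in hand, I would then invoke Theorem~\ref{thm:equaltotalfordirect} with $G=T$. The ditree $T$ has minimum in-degree at least $1$ and satisfies $\opack(T)=\gamma_t(T)$, so the hypothesis of that theorem is met; since $H$ is an arbitrary digraph with $\delta^-(H)\ge 1$, the theorem delivers $\gamma_t(T\times H)=\gamma_t(T)\gamma_t(H)$ at once. There is no genuine obstacle here: the entire content of the statement has already been isolated in the two earlier results, and the only thing to verify is that the hypotheses line up, which they do essentially verbatim. The sole subtlety worth flagging is that the assumption $\delta^-(T)\ge 1$ plays a double role --- it guarantees that $\gamma_t(T)$ is even defined, and it is exactly what Theorem~\ref{thm:openpacking-tdom-ditrees} needs in order to produce the packing equality $\opack(T)=\gamma_t(T)$ --- so nothing beyond a careful matching of the in-degree conditions is required to complete the argument.
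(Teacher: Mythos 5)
Your proposal is correct and follows exactly the paper's route: the paper also derives the corollary by combining Theorem~\ref{thm:openpacking-tdom-ditrees} (to obtain $\opack(T)=\gamma_t(T)$) with Theorem~\ref{thm:equaltotalfordirect} applied with $G=T$. Nothing further is needed.
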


Somewhat surprisingly there are pairs of digraphs whose direct product requires all of the vertex set to totally dominate.  For example, for a positive integer $n$
let $G_n$ be the digraph formed from the cycle $C_n$ by orienting its edges so that every vertex has in-degree $1$ and out-degree $1$.  It follows that
$\opack(G_n)=\gamma_t(G_n)=n$.  Thus for positive integers $n$ and $m$ we get by Theorem~\ref{thm:equaltotalfordirect} that $\gamma_t(G_m \times G_n)=mn$.

\section{Lower bounds on $\gamma(G\cp H)$} \label{sec:results}

Recall the proposition announced in the beginning of the paper.

\medskip

\noindent \textbf{Proposition~\ref{prp:packing}} \emph{
If $G$ and $H$ are digraphs, then
\[
\gamma(G \cp H) \ge \max\{\gamma(G)\rho(H),\gamma(H)\rho(G)\}\,.
\]
}
The proposition can be proved as follows. Given a packing $P=\{p_1,\ldots,p_{\rho(G)}\}$ of $G$, partition a subset of $V(G)\times V(H)$ into the blocks $N_G^-[p_i]\times V(H)$ where $i\in  [\rho(G)]$. These blocks are indeed pairwise disjoint, since $P$ is a packing.  Note that only vertices of $N_G^-[p_i]\times V(H)$ can be used to dominate vertices in the $H$-fiber $^{p_i}\!H$, and at least $\gamma(H)$ of such vertices are needed to dominate $^{p_i}\!H$. This gives $\gamma(G \cp H) \ge \gamma(H)\rho(G)$, and for the other inequality reverse the roles of $G$ and $H$.

Combining Proposition~\ref{prp:packing} and Theorem~\ref{thm:packing-dom-ditrees}, we infer that all ditrees satisfy Vizing's inequality:

\begin{cor}
\label{cor:allditrees}
If $T$ is a ditree and $H$ is a digraph, then
\[
\gamma(T \cp H) \ge \gamma(T)\gamma(H)\,.
\]
\end{cor}

Proposition~\ref{prp:packing} and Corollary~\ref{cor:allditrees} are extensions of results of Jacobson and Kinch~\cite[Theorem 2 and Corollary 3]{JK} from undirected graphs to digraphs.

It is known that the inequality analogous to~\eqref{VCeqn} does not hold in the context of digraphs; see e.g.~\cite{nca-2009}. For a small sporadic example of digraphs that do not satisfy Vizing's inequality, consider the digraphs $G_1$ and $H$ in
Figure~\ref{fig:two-digraphs}. Note that $\gamma(G_1)=2,\gamma(H)=3$, while $\gamma(G_1\Box H)=5$. To see $\gamma(G_1\Box H)\le 5$ note that the set $\{(c,u),(a,v),(c,x),(a,y),(b,z)\}$ is a dominating set of $G_1\Box H$.

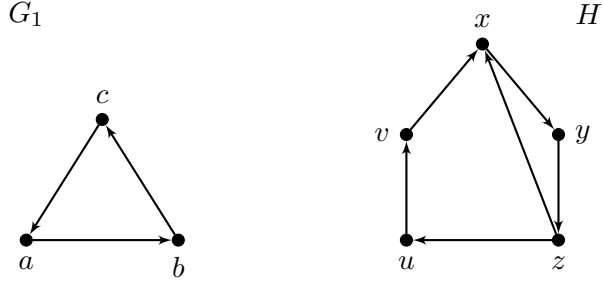
\begin{figure}[h!]
\begin{center}
\begin{tikzpicture}[auto]
\tikzset{edge/.style = {->,> = latex'}}
	\vertex (a) at (-4,0) [scale=.75pt,fill=black,label=below:$a$]{};
	\vertex (b) at (-2,0) [scale=.75pt,fill=black,label=below:$b$]{};
	\vertex (c) at (-3,1.6) [scale=.75pt,fill=black,label=above:$c$]{};
	\vertex (u) at (1,0) [scale=.75pt,fill=black,label=below:$u$]{};
	\vertex (v) at (1,1.4) [scale=.75pt,fill=black,label=left:$v$]{};
	\vertex (x) at (2,2.6) [scale=.75pt,fill=black,label=above:$x$]{};
	\vertex (y) at (3,1.4) [scale=.75pt,fill=black,label=right:$y$]{};
	\vertex (z) at (3,0) [scale=.75pt,fill=black,label=below:$z$]{};
	
	\draw(-4,3)node{$G_1$};
    \draw(3.4,3)node{$H$};
	\draw[edge, thick] (a) to (b);
	\draw[edge, thick] (b) to (c);
	\draw[edge, thick] (c) to (a);
	\draw[edge, thick] (u) to (v);
	\draw[edge, thick] (v) to (x);
	\draw[edge, thick] (x) to (y);
	\draw[edge, thick] (y) to (z);
	\draw[edge, thick] (z) to (u);
	\draw[edge, thick] (z) to (x);

\end{tikzpicture}
\end{center}
\caption{The digraphs $G_1$ and $H$.}
\label{fig:two-digraphs}
\end{figure}

More generally, consider the following infinite family of examples that do not satisfy Vizing's inequality.  For a positive integer $m$ the digraph $G_m$ has vertex set $V(G_m)=\{v_1,v_2,v_3,\ldots,v_{2m},v_{2m+1}\}$ and arc set $A(G_m)=\{v_1v_{2i} :\, i \in [m]\} \cup \{v_{2i}v_{2i+1}:\, i \in [m]\} \cup \{v_{2i+1}v_1:\, i \in [m]\}$. It is easy to see that $\rho(G_m)=m$ and $\gamma(G_m)=m+1$.  Figure~\ref{fig:noVC} illustrates $G_3$, and $G_1$ is the digraph on the left in Figure~\ref{fig:two-digraphs}.

\begin{figure}[h!]
\begin{center}
\begin{tikzpicture}[auto]
\tikzset{edge/.style = {->,> = latex'}}
    \vertex (0) at (0,2) [scale=1pt,fill=black,label=above:$v_1$]{};
	\vertex (1) at (-5,0) [scale=1pt,fill=black,label=below:$v_2$]{};
	\vertex (2) at (-3,0) [scale=1pt,fill=black, label=below:$v_3$]{};
	\vertex (3) at (-1,0) [scale=1pt,fill=black, label=below:$v_4$]{};
    \vertex (4) at (1,0) [scale=1pt,fill=black, label=below:$v_5$]{};
    \vertex (5) at (3,0) [scale=1pt,fill=black, label=below:$v_6$]{};
    \vertex (6) at (5,0) [scale=1pt,fill=black, label=below:$v_7$]{};
    \draw[edge, thick] (0) to (1);
    \draw[edge, thick] (0) to (3);
    \draw[edge, thick] (0) to (5);
	\draw[edge, thick] (1) to (2);
	\draw[edge, thick] (2) to (0);
    \draw[edge, thick] (3) to (4);
    \draw[edge, thick] (4) to (0);
    \draw[edge, thick] (5) to (6);
    \draw[edge, thick] (6) to (0);
\end{tikzpicture}
\end{center}
\caption{The digraph $G_3$.}
\label{fig:noVC}
\end{figure}
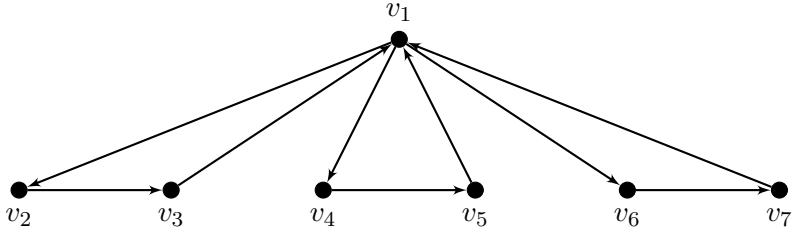

It can be verified that the set $S$, defined by
\[S=\{(v_1,v_{2i}):\, i\in [m]\} \cup \{(v_{2i},v_{2j+1}):\, i \in [m] \text{ and } j\in [m]\} \cup \{(v_{2j+1},v_1):\, j\in [m]\}\,,\]
dominates $G_m \cp G_m$.  Since $|S|=m+m(m+1)$, it follows that
\[\gamma(G_m \cp G_m)\le |S|=m^2+2m < \gamma(G_m)\gamma(G_m)\,.\]

Next, we present a different lower bound on $\gamma(G\cp H)$ expressed in terms of the domination numbers of digraphs $G$ and $H$, and prove that the inequality is sharp. The lower bound is a digraph version of the theorem proved by Zerbib~\cite{Zerbib}, which improves earlier results of Clark and Suen~\cite{ClSu20} and Suen and Tarr~\cite{ST2012}. The proof follows the idea of a framework presented in~\cite{BHHKR2020+}, which works also in the general (directed) case.

\begin{thm} \label{thm:Zerbib}
If $G$ and $H$ are digraphs, then
\[
\gamma(G \cp H) \ge \frac{1}{2}\gamma(G)\gamma(H) + \frac{1}{2}\max\{\gamma(G),\gamma(H)\}\,,
\]
and the bound is sharp.
\end{thm}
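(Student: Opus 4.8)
The plan is to prove the lower bound by adapting the Clark--Suen double-counting framework, in the refined form of~\cite{BHHKR2020+}, to the directed setting, and then to exhibit an explicit family meeting the bound. Since the inequality is symmetric in the two factors, I may assume $\gamma(G)\ge\gamma(H)$, so that $\max\{\gamma(G),\gamma(H)\}=\gamma(G)$ and the goal becomes $\gamma(G\cp H)\ge\tfrac12\gamma(G)(\gamma(H)+1)$, equivalently $2\,\gamma(G\cp H)\ge\gamma(G)(\gamma(H)+1)$. Fix a minimum dominating set $D$ of $G\cp H$ and a $\gamma(H)$-set $\{h_1,\dots,h_{\gamma(H)}\}$. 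Since $V(H)=N^+_H[\{h_1,\dots,h_{\gamma(H)}\}]$, I would partition $V(H)=\bigsqcup_i\Pi_i$ with each $\Pi_i\subseteq N^+_H[h_i]$ and $h_i\in\Pi_i$, thereby slicing $V(G\cp H)$ into the sets $V(G)\times\Pi_i$. Writing $D_i=D\cap(V(G)\times\Pi_i)$ and letting $p_i\subseteq V(G)$ be the projection of $D_i$ onto $G$, the slices being disjoint gives $\sum_i|p_i|\le|D|$.

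The engine is a per-slice inequality obtained from the central $G$-fiber $G^{h_i}$. Each $(u,h_i)$ is dominated, and because $h_i\in\Pi_i$ every horizontal dominator $(u',h_i)$ (with $u'\in N^-_G[u]$) already lies in $D_i$; hence $(u,h_i)$ can be dominated from outside slice $i$ only vertically, by some $(u,v')\in D$ with $v'\in N^-_H(h_i)\setminus\Pi_i$. Let $L_i$ be the set of those $u$ whose central vertex $(u,h_i)$ is dominated only from other slices. Then $p_i$ dominates $N^+_G[p_i]\supseteq V(G)\setminus L_i$, so $p_i\cup L_i$ is a dominating set of $G$ and $\gamma(G)\le|p_i|+|L_i|$. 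Summing over the $\gamma(H)$ slices,
\[
\gamma(G)\gamma(H)\le\sum_i|p_i|+\sum_i|L_i|\le|D|+\sum_i|L_i|\,.
\]
It therefore suffices to produce, from $D$, a family of per-slice dominating sets $S_i$ of $G$ whose sizes satisfy $\sum_i|S_i|\le 2|D|-\gamma(G)$, since then $\gamma(G)\gamma(H)\le\sum_i|S_i|\le 2|D|-\gamma(G)$, which is exactly the target.

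Controlling this total is the technical heart and I expect it to be the main obstacle. The naive choice $S_i=p_i\cup L_i$ does \emph{not} work: $\sum_i(|p_i|+|L_i|)$ can badly exceed $2|D|$, because a single vertex $(u,v')\in D$ whose $H$-coordinate $v'$ out-dominates several of the $h_i$ simultaneously witnesses $u\in L_i$ for many slices $i$, an effect that is if anything more pronounced in digraphs than in graphs. The framework of~\cite{BHHKR2020+} circumvents this by replacing the crude sets $L_i$ with a careful assignment of ``domination responsibility'' to the vertices of $D$, designed so that each vertex of $D$ is charged to at most two of the per-slice dominating sets; this alone yields $\sum_i|S_i|\le 2|D|$ and hence the Clark--Suen-type bound $\gamma(G\cp H)\ge\tfrac12\gamma(G)\gamma(H)$. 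The additional additive term comes from a refinement in which the minimality of $D$ forces one full copy of a $\gamma(G)$-set to be counted only once, improving the total to $2|D|-\gamma(G)$ and producing the extra $\tfrac12\gamma(G)=\tfrac12\max\{\gamma(G),\gamma(H)\}$. Transporting the charging verbatim to the digraph case should require only careful bookkeeping of arc directions---in-neighborhoods for the incidence between $D$ and the fibers, out-neighborhoods for domination itself---since the only structural inputs used are that horizontal dominators of $(u,h_i)$ remain inside slice $i$ and that all external help is vertical.

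For sharpness I would exhibit an explicit infinite family of pairs $(G,H)$ with $\gamma(G)\ge\gamma(H)$ attaining equality. The strategy is to present, for each member, a dominating set of $G\cp H$ of size exactly $\tfrac12\gamma(G)(\gamma(H)+1)$ through a direct fiberwise construction, in the spirit of the dominating set $S$ exhibited for $G_m\cp G_m$ earlier in this section, and then to match it from below by the inequality just established. Verifying that the constructed set is dominating and has the claimed cardinality is a routine---if careful---calculation, and confirming $\gamma(G)\ge\gamma(H)$ on the family guarantees that the additive term equals $\tfrac12\gamma(G)$, so that the upper and lower estimates coincide.
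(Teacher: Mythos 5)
There is a genuine gap here, and you have in effect flagged it yourself: after setting up the slicing and deriving $\gamma(G)\gamma(H)\le|D|+\sum_i|L_i|$, you reduce the theorem to producing per-slice dominating sets $S_i$ of $G$ with $\sum_i|S_i|\le 2|D|-\gamma(G)$, and then you do not construct them. That construction is not bookkeeping to be imported ``verbatim'' from the cited framework --- it is the entire content of the theorem. The easy inequality you prove is worthless on its own (the $L_i$ can overlap massively, as you note), and the sentence ``the minimality of $D$ forces one full copy of a $\gamma(G)$-set to be counted only once'' is an assertion, not an argument; it also misidentifies where the additive term actually comes from. In the paper's proof the partition is \emph{not} built from an arbitrary $\gamma(H)$-set of $H$. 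Instead one takes the projection $S_G=p_G(S)$ of the minimum dominating set $S$ onto $G$ (the factor with the larger domination number), chooses a \emph{minimum-size dominating subset} $U=\{u_1,\dots,u_k\}\subseteq S_G$ of $G$, and partitions $V(G)$ into $k\ge\gamma(G)$ blocks $\pi_i\subseteq N^+[u_i]$. The cells $\pi_i\times\{h\}$ are then four-colored, and two counting claims are proved: (i) projecting the blue/green/red cells of each $\pi_i\times V(H)$ onto $H$ yields a dominating set of $H$, giving $b'+g'+r'\ge k\gamma(H)$; and (ii) for each $h$, the minimality of $U$ inside $S_G$ forces $r'_h\le b_h-b'_h+g_h$. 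The extra $\tfrac12\gamma(G)$ then falls out of the inequality $b\ge k\ge\gamma(G)$, which holds because each $u_i$ lies below some vertex of $S$ in its own $H$-fiber, producing $k$ distinct cells that meet $S$. None of these three ingredients (partitioning by a minimal dominating subset of the projection, the minimality step in the red-cell bound, and $b\ge k$) appears in your outline, and your choice to partition $V(H)$ by an arbitrary $\gamma(H)$-set is precisely the classical Clark--Suen setup, which by itself only yields $\gamma(G\cp H)\ge\tfrac12\gamma(G)\gamma(H)$.

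The sharpness half is also only promised, not delivered. The paper settles it with a two-line example: the strongly connected oriented triangle $T$ satisfies $\gamma(T)=2$ and $\gamma(T\cp T)=3=\tfrac12\gamma(T)^2+\tfrac12\gamma(T)$, and a separate infinite family $H_m\cp G_1$ is then exhibited. Your proposal would be acceptable as a plan of attack, but as a proof it is missing its central charging argument and its witness for tightness.
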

\begin{proof}
We may assume with no loss of generality that $\gamma(G) \ge \gamma(H)$. Let
$S$ be a minimum dominating set of $G \cp H$, and let
 $S_G$ be the projection $p_G(S)$ of the set $S$ to $G$. Since $S$ is a (minimum) dominating set of $G \cp H$, the set $S_G$ is a dominating set of $G$. Among all subsets of vertices of $S_G$ that form a dominating set of $G$, let $U = \{u_1, \ldots, u_k\}$ be one of minimum size. Since $U$ is a dominating set of $G$, we have $|U|=k\ge \gamma(G)$.

Consider a partition $\pi =\{\pi_1,\ldots,\pi_k\}$ of $V(G)$ chosen so that $u_i \in \pi_i$ and $\pi_i \subseteq N^+[u_i]$ for each $i$.  Let $H_i= \pi_i \times V(H)$.
For a vertex $h$ of $H$ the set of vertices $\pi_i\times\{h\}$ is a \emph{cell}, and is denoted shortly by $\pi_i^h$.

The cell $\pi_i^h$ is \emph{blue} if $\pi_i^h\cap S\neq\emptyset$ and $\pi_i^h$ is dominated by $S^h=S\cap G^h$.  The cell $\pi_i^h$ is \emph{green} if $\pi_i^h\cap S\neq\emptyset$ and $\pi_i^h$ is not dominated by $S^h$.
Finally, the cell is \emph{red} if it is dominated by $S^h$, and no vertex of $\pi_i^h$ is dominated by $S_i=S\cap V(H_i)$ (in particular, a red cell contains no vertex of $S$). All the remaining cells in $G^h$ are colored \emph{white}.  Note that exactly the cells with color blue and green contain vertices of $S$. We call vertices in $S$ that belong to the cell $\pi_i^h$ \emph{blue}, respectively \emph{green}, if the cell $\pi_i^h$ is blue, respectively green.
In the projection $p_H(H_i)$ of the cells of $H_i$ to $H$, the vertex $h = p_H(\pi_i^h)$ receives the color of the cell $\pi_i^h$ for each $i \in [k]$.

We also introduce the following notation concerning the size of some of the sets having certain colors.  Let $b_h'$ be the number of blue cells in the
fiber $G^h$, let $b_i'$ the number of blue cells in $H_i$, and let $b'$ the total number of all blue cells in $G\cp H$. We define analogously $g_h'$, $g_i'$, $g'$,
associated with the green cells  and $r_h'$, $r_i'$, and $r'$, associated with red cells.
Next, let $b_h$ and $b_i$ denote the number of blue vertices
in $G^h$ and $H_i$, respectively, and let $b$ be the total number of blue vertices in $G \cp  H$. In an analogous way we define $g_h$, $g_i$ and $g$, associated with the green vertices. Clearly, $|S|=b+g$. Since each blue cell contains at least one blue vertex, we get $b_h\ge b_h'$, $b_i\ge b_i'$ and $b \ge b'$, and analogously, $g_h \ge g_h'$, $g_i\ge g_i'$ and $g \ge g'$.

We need two auxiliary claims.

\begin{claim}
\label{c:claim1}
$b'+g'+r' \ge k \gamma(H)$.
\end{claim}
\begin{proof}
For any $i\in [k]$, we consider the cells of $H_i$. Note that the vertex $h=p_H(\pi_i^h)$ receives the color of the cell $\pi_i^h$. We claim that the set $P$ of vertices in $H$ that received (by the projection) one of the colors blue, green or red, forms a dominating set of $H$. Indeed, consider a white vertex $h\in V(H)$ that is projected from a white cell $\pi_i^h$.  By definition the white cell $\pi_i^h$ contains at least one vertex that is dominated by $S_i$. More precisely, $\pi_i^h$ contains a vertex $(u,h)$ that is dominated by a vertex $(u,h')\in S$, which is clearly blue or green. Hence $h'=p_H(\pi_i^{h'})$ received color blue or green and it dominates $h$ in $H$, as claimed.  Therefore, $P$ is a dominating set of $H$ and it is clear that $|P|=b_i'+g_i'+r_i'$, which implies $b_i'+g_i'+r_i'\ge \gamma(H)$. Summing up over all $i$ between $1$ and $k$, we get
$b'+g'+r' = \sum_{i=1}^{k}{(b_i'+g_i'+r_i')} \ge k\gamma(H)$.
\end{proof}

\begin{claim}
\label{c:claim2}
$r'\le b-b'+g$.
\end{claim}
\begin{proof}
For $h \in V(H)$, consider the blue and green vertices in the fiber $G^h$. These $b_h + g_h$ vertices dominate all vertices in the red and blue cells in
$G^h$. In the projection $p_G(G^h)$ of $G^h$ onto $G$ we note that the vertices projected from the blue and green vertices, together with the vertices $u_i$
from every projected white and green cell $\pi^h_i$, form a dominating set $U_h$ of $G$. Formally,
\[
U_h = p_G(S \cap G^h) \cup \{u_i \in U \mid \pi^h_i \mbox{ is a white or green cell}\}.
\]
Since the set $U_h$ is a dominating set of $G$ and since $U_h \subseteq S_G$, by the minimality of $U$ we have $|U| \le |U_y|$. Thus, $k = |U| \le |U_h| = b_h + g_h + (k - b_h' - r_h')$, or, equivalently, $r_h' \le b_h - b_h' + g_h$. Summing over all vertices $h \in V(H)$, we infer that $r' \le b-b'+g$.
\end{proof}

We now return to the proof of Theorem~\ref{thm:Zerbib}. For each vertex $u_i \in U$, let $(u_i,h)$ be a vertex of $S$ that belongs to the $H$-fiber, $^{u_i}\!H$. Since the cell $\pi_i^h$ is a blue cell, there are at least $k$ blue cells, implying that $b \ge k$. By Claim~\ref{c:claim1} and~\ref{c:claim2}, the following holds.
\[
\begin{array}{lcl}
\gamma(G)+\gamma(G)\gamma(H) & \le &
k + k \gamma(H) \\
& \le & b + k \gamma(H) \\
& \stackrel{(Claim~\ref{c:claim1})}{\le} & b + (b' + g' + r') \\
& \stackrel{(Claim~\ref{c:claim2})}{\le} & b + (b' + g' + (b - b' + g)) \\
& = & 2b + g + g' \\
& \le & 2(b + g) \\
& = & 2|S|,
\end{array}
\]
implying that $\gamma(G \cp H) = |S| \ge \frac{1}{2} \gamma(G)\gamma(H) + \frac{1}{2}\max\{\gamma(G),\gamma(H)\}$.

To see the sharpness of the bound, consider the Cartesian product of the strongly connected oriented triangle $T$ with itself. Note that $\gamma(T)=2$, while $\gamma(T\cp T)=3=\frac{1}{2}\gamma(T)^2+ \frac{1}{2}\max\{\gamma(T),\gamma(T)\}$.
\end{proof}

The following example gives an infinite family of strongly connected digraphs that attain the bound in Theorem~\ref{thm:Zerbib}. They also show that $\gamma(G)\gamma(H)-\gamma(G\cp H)$ can be arbitrarily large. Let $m=3k$, where $k\ge 3$, and for every $i\in [m]$ let $T_i$ be a copy of the strongly connected oriented triangle $G_1$ from Figure~\ref{fig:two-digraphs}. Let $V(T_i)=\{a_i,b_i,c_i\}$, and let $H_m$ be the digraph with $$V(H_m)=\bigcup_{i\in m}{V(T_i)}\cup \{d_1,\ldots,d_m\},$$
and $A(H_m)$ defined as follows. Let $$A'=\bigcup_{i=1}^k{\{a_{3i-2}d_{3i-2},a_{3i-2}d_{3i-1},a_{3i-2}d_{3i},b_{3i-1}d_{3i-2},b_{3i-1}d_{3i-1},b_{3i-1}d_{3i},c_{3i}d_{3i-2},c_{3i}d_{3i-1},c_{3i}d_{3i}\}},$$
$$A''=\{d_ia_{i+3}:\, i\in [m]\textrm{ and }i\textrm{ is taken modulo }m\},$$
and, $$A(H_m)=A'\cup A''\cup \bigcup_{i\in m}{A(T_i)}.$$
Note that $\gamma(H_m)=2m$. Now, taking the strongly connected oriented triangle $G_1$ with $V(G_1)=\{a,b,c\}$, the set $$D= \{(a_i,a):\,i\in [m]\}\cup\{(b_i,b):\,i\in [m]\}\cup \{(c_i,c),i\in [m]\}$$
is a dominating set of $H_m\cp G_1$ with size $|D|=3m$. This implies $$3m\ge \gamma(H_m\cp G_1)\ge \frac{1}{2}\gamma(H_m)\gamma(G_1)+ \frac{1}{2}\max\{\gamma(H_m),\gamma(G_1)\}=\frac{1}{2}(4m)+\frac{1}{2}(2m)=3m.$$

The following problem is natural, but may be difficult.

\begin{prob}
Characterize the (pairs of) digraphs $G$ and $H$ that attain the equality in the inequality of Theorem~\ref{thm:Zerbib}. That is, $\gamma(G \cp H)=\frac{1}{2}\gamma(G)\gamma(H) + \frac{1}{2}\max\{\gamma(G),\gamma(H)\}$.
\end{prob}

We conclude the section with the following easy observation, which extends from an analogous result in undirected graphs.

\begin{obs} \label{obs:maxbound}
If $G$ is a digraph with maximum out-degree $\Delta$, then $\gamma(G) \ge \frac{n(G)}{\Delta+1}$.
\end{obs}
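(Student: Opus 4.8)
The plan is to give the standard covering/counting argument, now phrased with closed out-neighborhoods since this is the directed setting. First I would let $S$ be a $\gamma(G)$-set, so that $|S|=\gamma(G)$ and, by the characterization of domination recorded at the start of Section~\ref{sec:p-d-digraphs}, the collection $\{N^+_G[v] : v \in S\}$ covers $G$; that is, $V(G)=N^+_G[S]=\bigcup_{v\in S}N^+_G[v]$.

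Next I would bound the size of a single closed out-neighborhood. For each $v\in V(G)$ we have $|N^+_G[v]|=\odeg(v)+1$, and since $\Delta$ is the maximum out-degree of $G$ this gives $|N^+_G[v]|\le \Delta+1$. Applying the union bound to the covering displayed above then yields
\[
n(G)=|V(G)|=\Big|\bigcup_{v\in S}N^+_G[v]\Big|\le \sum_{v\in S}|N^+_G[v]|\le |S|\,(\Delta+1)=\gamma(G)\,(\Delta+1).
\]
Rearranging gives $\gamma(G)\ge \dfrac{n(G)}{\Delta+1}$, which is the desired inequality.

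There is essentially no obstacle here: the only point worth flagging is that, unlike in the undirected case where one uses $|N[v]|=\deg(v)+1$, the directed version requires care that domination is defined through \emph{out}-neighborhoods while $\Delta$ is the maximum \emph{out}-degree, so the two match up correctly and each covering set has size at most $\Delta+1$. With that alignment in place the argument is a one-line counting bound.
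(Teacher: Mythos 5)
Your argument is correct and is exactly the standard counting bound that the paper leaves implicit (the observation is stated without proof as an easy extension of the undirected case): a $\gamma(G)$-set covers $V(G)$ by closed out-neighborhoods, each of size at most $\Delta+1$, giving $n(G)\le\gamma(G)(\Delta+1)$. Your remark that domination via out-neighborhoods must be matched with the maximum out-degree is the right point to check, and it checks out.
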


As an easy application of Observation~\ref{obs:maxbound} let $\overrightarrow{C_m}$ be the strongly connected oriented cycle of order $m$ (in particular, $\overrightarrow{C_3}=G_1$).  If $m$ and $n$ are integers larger than $3$, it follows that
\[\gamma(\overrightarrow{C}_n \cp \overrightarrow{C}_m) \ge \left\lceil\frac{nm}{3}\right\rceil\ge \left\lceil\frac{n}{2}\right\rceil \left\lceil\frac{m}{2}\right\rceil
=\gamma(\overrightarrow{C}_n)\gamma(\overrightarrow{C}_m)\,.\]
That is, for two strongly connected cycles, both of order at least $4$, we see they satisfy the inequality in the digraph version of Vizing's conjecture.

\section{Equality in Vizing's Inequality} \label{sec:equality}
By Corollary~\ref{cor:allditrees}, every ditree satisfies Vizing's inequality, and it would be interesting to know for which ditrees $T$ and digraphs $H$ the equality $\gamma(T\cp H)=\gamma(T)\gamma(H)$ holds.
More generally, the same question can be posed for any digraph that satisfies Vizing's inequality. In the undirected case, the question of determining the pairs of graphs attaining the equality in Vizing's inequality has been studied by several authors, where we specifically mention the work of Fink, Jacobson, Kinch, and Roberts \cite{FJKR} as well as Jacobson and Kinch~\cite{JK}. It turns out that the graphs $C_4, P_4$ and corona graphs play an important role in many constructions of families of graphs with $\gamma(G\cp H)=\gamma(G)\gamma(H)$; see~\cite[Section 7.5]{hr-1998}.

Note that there exist four non-isomorphic orientations of the graph $C_4$, which can be distinguished by the sequence of the outdegrees of vertices presented in the circular order, and written as follows: $C_4^{(0,2,1,1)}$, $C_4^{(0,1,2,1)}$, $C_4^{(0,2,0,2)}$, and $C_4^{(1,1,1,1)}=\overrightarrow{C_4}$. Interestingly,
all four digraphs have packing number and domination number equal to $2$, and thus each satisfies Vizing's inequality as can be seen by applying Proposition~\ref{prp:packing}.

We take a closer look at $C_4^{(0,2,0,2)}$ depicted in Figure~\ref{fig:C_4}, with vertices $u$ and $v$ having outdegree $2$, and present a large family of digraphs $G$ with $\gamma(G\cp C_4^{(0,2,0,2)})=\gamma(G) \gamma(C_4^{(0,2,0,2)})$.

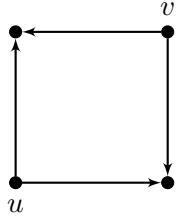
\begin{figure}[h!]
\begin{center}
\begin{tikzpicture}[auto]
\tikzset{edge/.style = {->,> = latex'}}

	\vertex (1) at (0,0) [scale=.75pt,fill=black,label=below:$u$]{};
	\vertex (2) at (0,2) [scale=.75pt,fill=black]{};
	\vertex (3) at (2,0) [scale=.75pt,fill=black]{};
	\vertex (4) at (2, 2) [scale=.75pt,fill=black,label=above:$v$]{};
	\draw[edge, thick] (1) to (2);
	\draw[edge, thick] (1) to (3);
	\draw[edge, thick] (4) to (2);
	\draw[edge, thick] (4) to (3);

\end{tikzpicture}
\end{center}
\caption{Particular directed version of $C_4$.}
\label{fig:C_4}
\end{figure}

\begin{lem} \label{lem:C4}
If $G$ is a digraph such that $V(G)$ can be partitioned into two dominating sets, then $\gamma(G\cp C_4^{(0,2,0,2)})\le n(G)$.
\end{lem}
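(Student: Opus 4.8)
The plan is to construct an explicit dominating set of $G \cp C_4^{(0,2,0,2)}$ of size exactly $n(G)$, assembled from the two dominating sets provided by the hypothesis. First I would fix notation for the factor: write the vertex set of $C_4^{(0,2,0,2)}$ as $\{u,v,p,q\}$, where $u$ and $v$ are the two vertices of out-degree $2$ and $p,q$ the two vertices of out-degree $0$. From the orientation depicted in Figure~\ref{fig:C_4} one reads off $N^+(u)=N^+(v)=\{p,q\}$, while $p$ and $q$ dominate only themselves. Let $A$ and $B$ be a partition of $V(G)$ into two dominating sets of $G$, so that $A\cup B=V(G)$, $A\cap B=\emptyset$, and $|A|+|B|=n(G)$.

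The candidate is the set
\[
S=(A\times\{u\})\cup(B\times\{v\}),
\]
which has cardinality $|A|+|B|=n(G)$. It then remains only to check that $S$ dominates all four $G$-fibers of $G\cp C_4^{(0,2,0,2)}$, which I would handle fiber by fiber, keeping track of the two types of arcs in a Cartesian product. The fiber $G^u$ is a copy of $G$ and $A\times\{u\}\subseteq S$ corresponds to the dominating set $A$, so $S$ dominates $G^u$; symmetrically $B\times\{v\}$ dominates $G^v$.

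For $G^p$ and $G^q$ I would exploit the inter-fiber arcs inherited from the factor: since $(u,p)$, $(u,q)$, $(v,p)$, $(v,q)$ are arcs of $C_4^{(0,2,0,2)}$, each vertex $(g,u)$ dominates $(g,p)$ and $(g,q)$, and likewise each $(g,v)$ dominates $(g,p)$ and $(g,q)$. Hence for $g\in A$ the vertex $(g,u)\in S$ dominates $(g,p)$ and $(g,q)$, while for $g\in B$ the vertex $(g,v)\in S$ does the same. As $A\cup B=V(G)$, every vertex of $G^p$ and of $G^q$ is dominated, so $S$ is a dominating set and $\gamma(G\cp C_4^{(0,2,0,2)})\le n(G)$.

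I do not anticipate a genuine obstacle: the argument is a direct construction, and the only point needing care is the bookkeeping of the two arc types in the Cartesian product. The essential structural fact being used is that $u$ and $v$ each reach both $p$ and $q$, so that splitting $V(G)$ between the $u$-fiber and the $v$-fiber lets the copies placed there dominate the entire $p$- and $q$-fibers, while independently dominating their own fibers because $A$ and $B$ are each dominating in $G$.
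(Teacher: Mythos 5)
Your proposal is correct and uses essentially the same argument as the paper: the identical set $S=(A\times\{u\})\cup(B\times\{v\})$, with the $u$- and $v$-fibers dominated via the dominating sets $A$ and $B$ inside $G$-fiber arcs, and the remaining two fibers dominated via the arcs from $u$ and $v$ inherited from $C_4^{(0,2,0,2)}$. The only difference is cosmetic: you organize the verification fiber by fiber, while the paper cases on the second coordinate of an arbitrary vertex.
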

\begin{proof}
Let $A$ and $B$ be dominating sets of $G$, where $A\cap B=\emptyset$ and $A\cup B=V(G)$. We claim that the set $S=\{(x,u):\ x\in A\}\cup\{(x,v):\ x\in B\}$ is a dominating set of $G\cp C_4$. Let $(x,h)\in V(G\cp C_4^{(0,2,0,2)})$. If $h\notin\{u,v\}$, then it is clear that either $(x,u)\in S$ or $(x,v)\in S$, and so $(x,h)$ is dominated by one of these two vertices. Suppose that  $(x,h)\in V(G\cp C_4^{(0,2,0,2)})$, and assume that $h=u$. Now, if $x\in A$, then $(x,u)\in S$, so $(x,u)$ is dominated by itself. On the other hand, if $x\in B$, then there exists $y\in A$ such that $yx$ is an arc in $G$ (since $A$ is a dominating set of $G$). Since $(y,u)(x,u)$ is an arc in $G\cp C_4^{(0,2,0,2)}$, we derive that $(x,u)$ is dominated by $(y,u)\in S$. In the same way we prove that $(x,v)$ is dominated by $S$.
Since $S$ is a dominating set of $G\cp C_4^{(0,2,0,2)}$ and $|S|=n(G)$, we derive that $\gamma(G\cp C_4^{(0,2,0,2)})\le n(G)$.
\end{proof}

By using Lemma~\ref{lem:C4}, we prove the following result for digraphs that satisfy Vizing's inequality.

\begin{prop}\label{prp:C4} If $G$ is a digraph such that $V(G)$ can be partitioned into two minimum dominating sets, then $\gamma(G\cp C_4^{(0,2,0,2)})=\gamma(G)\gamma(C_4^{(0,2,0,2)})$.
\end{prop}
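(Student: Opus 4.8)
The plan is to sandwich $\gamma(G\cp C_4^{(0,2,0,2)})$ between upper and lower bounds that both equal $\gamma(G)\gamma(C_4^{(0,2,0,2)})=2\gamma(G)$, using the two results already in hand: Lemma~\ref{lem:C4} for the upper bound and Proposition~\ref{prp:packing} for the lower bound. The two numerical facts I would invoke are that $\gamma(C_4^{(0,2,0,2)})=2$ and, as recorded in the discussion preceding this proposition, $\rho(C_4^{(0,2,0,2)})=2$ as well.

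First I would establish the upper bound. Let $A$ and $B$ be the two \emph{minimum} dominating sets of $G$ that partition $V(G)$. Since each is a minimum dominating set, $|A|=|B|=\gamma(G)$; and since they partition $V(G)$, this forces $n(G)=|A|+|B|=2\gamma(G)$. In particular $A$ and $B$ are two dominating sets partitioning $V(G)$, so the hypothesis of Lemma~\ref{lem:C4} is met and it yields $\gamma(G\cp C_4^{(0,2,0,2)})\le n(G)=2\gamma(G)=\gamma(G)\gamma(C_4^{(0,2,0,2)})$.

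Next I would obtain the matching lower bound directly from Proposition~\ref{prp:packing} applied with $H=C_4^{(0,2,0,2)}$. That proposition gives $\gamma(G\cp C_4^{(0,2,0,2)})\ge\gamma(G)\,\rho(C_4^{(0,2,0,2)})=2\gamma(G)=\gamma(G)\gamma(C_4^{(0,2,0,2)})$. Combining the two inequalities yields the claimed equality.

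I do not anticipate a genuine obstacle, since both ingredients are already available; the only point requiring attention is the recognition that $C_4^{(0,2,0,2)}$ has packing number equal to its domination number, which is precisely what makes the packing lower bound of Proposition~\ref{prp:packing} tight against the counting upper bound of Lemma~\ref{lem:C4}. One should also confirm that the strengthened hypothesis ``partitioned into two \emph{minimum} dominating sets'' forces the exact equality $n(G)=2\gamma(G)$ rather than merely $n(G)\le 2\gamma(G)$; this is immediate, as a partition into two sets each of size $\gamma(G)$ has total size $2\gamma(G)$, and it is exactly this exactness that upgrades Lemma~\ref{lem:C4}'s inequality into the needed tight upper bound.
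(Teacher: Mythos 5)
Your proposal is correct and is essentially identical to the paper's proof: the paper also sandwiches $\gamma(G\cp C_4^{(0,2,0,2)})$ between the Vizing-type lower bound $\gamma(G)\gamma(C_4^{(0,2,0,2)})$ (which holds because $\rho(C_4^{(0,2,0,2)})=\gamma(C_4^{(0,2,0,2)})=2$, via Proposition~\ref{prp:packing}) and the upper bound $n(G)=2\gamma(G)$ from Lemma~\ref{lem:C4}. You merely spell out explicitly the packing-number justification that the paper folds into the phrase ``by the above observations.''
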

\begin{proof} By the above observations, the following is straightforward:
$$2\gamma(G)=\gamma(G)\gamma(C_4^{(0,2,0,2)})\le \gamma(G\cp C_4^{(0,2,0,2)})\le |V(G)|= 2\gamma(G).$$
Note that the last inequality follows from Lemma~\ref{lem:C4} since $V(G)$ can be partitioned into two minimum dominating sets. Hence all expressions in the chain of inequalities are equal, and in particular, $\gamma(G\cp C_4^{(0,2,0,2)})=\gamma(G)\gamma(C_4^{(0,2,0,2)})$.
\end{proof}

Let $\mathcal{C}$ represent the class of all digraphs whose underlying graph is a corona of a tree. By Corollary~\ref{cor:allditrees}, since the digraphs in $\cal C$ are ditrees, we infer that they satisfy Vizing's inequality.
In addition, it is clear that any digraph $G\in {\cal C}$ with the property that each leaf $\ell$ has ${\rm indeg}(\ell)=1={\rm outdeg}(\ell)$ admits the conditions of Proposition~\ref{prp:C4}. Indeed, a partition, where one of the subsets contains exactly the leaves, does the trick.

The following example shows that both arcs on the corona edges for a graph in ${\cal C}$ is not a necessary condition to achieve equality in the Vizing's  inequality for digraphs.
Let $D$ be the digraph in Figure~\ref{fig:cor(P3)}.  Note that both of $ux$ and $xu$ are arcs in $D$; however, $vy$ and $wz$ are both
arcs in $D$ but neither of $yv$ or $zw$ is an arc in $D$.  It is clear that $D$ satisfies the conditions of Theorem~\ref{prp:C4}. Indeed, take the partition $A=\{u,w,y\},B=\{v,x,z\}$. Therefore, $\gamma(C_4^{(0,2,0,2)} \cp D)=2\cdot 3=\gamma(C_4^{(0,2,0,2)})\gamma(D)$.

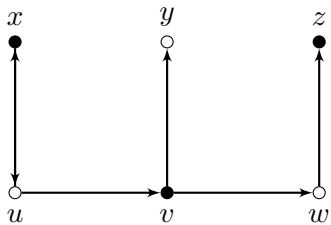
\begin{figure}[h!]
\begin{center}
\begin{tikzpicture}[auto]
\tikzset{edge/.style = {->,> = latex'}}
	\vertex (1) at (0,0) [scale=.75pt,fill=white,label=below:$u$]{};
	\vertex (2) at (2,0) [scale=.75pt,fill=black, label=below:$v$]{};
	\vertex (3) at (4,0) [scale=.75pt,fill=white, label=below:$w$]{};
    \vertex (4) at (0,2) [scale=.75pt,fill=black, label=above:$x$]{};
    \vertex (5) at (2,2) [scale=.75pt,fill=white, label=above:$y$]{};
    \vertex (6) at (4,2) [scale=.75pt,fill=black, label=above:$z$]{};
	\draw[edge, thick] (1) to (2);
	\draw[edge, thick] (2) to (3);
    \draw[edge, thick] (1) to (4);
    \draw[edge, thick] (4) to (1);
    \draw[edge, thick] (2) to (5);
    \draw[edge, thick] (3) to (6);
\end{tikzpicture}
\end{center}
\caption{A digraph $D \in \mathcal{C}$ with vertex shading indicating the partition.}
\label{fig:cor(P3)}
\end{figure}

We next focus on ditrees, which attain equality in Vizing's inequality when multiplied with an appropriate digraph. First, we present a necessary condition for these ditrees.

\begin{thm}
\label{thm:strongsupport} If $G$ is a digraph whose underlying graph is connected and $T$ is any ditree such that $\gamma(T\cp G) = \gamma(T)\gamma(G)$, then $T$ does not contain a strong support vertex adjacent to two non-isolated leaves.
\end{thm}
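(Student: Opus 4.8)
The plan is to establish the contrapositive in a quantitative form: assuming $T$ contains a strong support vertex $s$ with two non-isolated leaves $\ell_1,\ell_2$, I would strengthen the packing lower bound of Proposition~\ref{prp:packing} to $\gamma(T\cp G)\ge\gamma(T)\gamma(G)+1$, contradicting the assumed equality $\gamma(T\cp G)=\gamma(T)\gamma(G)$. First I would record the local structure. Since each $\ell_i$ is a leaf, its only neighbour in $\ugr{T}$ is $s$, and since it is non-isolated its unique in-neighbour must be $s$; hence $s\ell_i\in A(T)$ and $N^-_T[\ell_i]=\{\ell_i,s\}$ for $i\in\{1,2\}$. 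In particular $\ell_1$ and $\ell_2$ share the in-neighbour $s$, so no packing of $T$ contains both, and $s$ cannot lie in a packing together with either leaf.

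Next I would set up the counting. By Theorem~\ref{thm:packing-dom-ditrees} and Observation~\ref{obs:immediatedigraph} we have $\rho(T)=\gamma(T)=\alpha(\cing(T))$, and I would first argue that there is a \emph{maximum} packing $P$ of $T$ with $\ell_1\in P$. This follows from an exchange argument in $\cing(T)$: starting from any maximum packing (a maximum independent set of $\cing(T)$), one checks that at most one vertex of $\{s\}\cup N^+_T(s)$ can lie in the packing, that every $\cing(T)$-neighbour of $\ell_1$ belongs to $\{s\}\cup N^+_T(s)$, and hence that $\ell_1$ has at most one packing neighbour, which may be swapped out for $\ell_1$ without changing the size. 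For such a $P$ we get $s,\ell_2\notin P$, and because the out-neighbours of $\ell_2$ lie in $\{s\}$, the strip $\{\ell_2\}\times V(G)$ is disjoint from every block $N^-_T[p]\times V(G)$, $p\in P$. Exactly as in the proof of Proposition~\ref{prp:packing}, each block forces at least $\gamma(G)$ vertices of a minimum dominating set $S$ of $T\cp G$, and all blocks together with $\{\ell_2\}\times V(G)$ are pairwise disjoint.

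The crux is a local estimate showing that the combined region $R=\{\ell_1,\ell_2,s\}\times V(G)$ — which is $\ell_1$'s block $\{\ell_1,s\}\times V(G)$ together with the extra strip $\{\ell_2\}\times V(G)$ — must contain at least $\gamma(G)+1$ vertices of $S$. Writing $A_G,B_G,C_G$ for the projections to $V(G)$ of $S$ restricted to the $\ell_1$-, $\ell_2$-, and $s$-strips, domination of the fibers $^{\ell_1}\!G$ and $^{\ell_2}\!G$ translates, using $N^-_T[\ell_i]=\{\ell_i,s\}$, into the conditions $N^+_G[A_G]\cup C_G=V(G)$ and $N^+_G[B_G]\cup C_G=V(G)$. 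Each of these forces $A_G\cup C_G$, respectively $B_G\cup C_G$, to be a dominating set of $G$, so $|A_G|+|C_G|\ge\gamma(G)$ and $|B_G|+|C_G|\ge\gamma(G)$. If $A_G$ or $B_G$ is nonempty the bound $|A_G|+|B_G|+|C_G|\ge\gamma(G)+1$ is immediate; otherwise both are empty, forcing $C_G=V(G)$, and since the connected digraph $G$ has an arc we obtain $|C_G|=n(G)\ge\gamma(G)+1$. Summing the contributions of the $\rho(T)-1$ remaining blocks (each at least $\gamma(G)$) and the region $R$ then yields $|S|\ge(\rho(T)-1)\gamma(G)+\gamma(G)+1=\gamma(T)\gamma(G)+1$, the contradiction. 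I expect the two main obstacles to be the exchange argument guaranteeing a maximum packing through $\ell_1$ and the case analysis in the local estimate; the appeal to $G$ having an arc also indicates that the statement needs $G$ to be nontrivial, the case $n(G)=1$ being genuinely degenerate since there $T\cp G\cong T$.
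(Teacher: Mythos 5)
Your proof is correct (modulo the degenerate case you yourself flag) and its heart coincides with the paper's: both arguments rest on $\rho(T)=\gamma(T)$ (Theorem~\ref{thm:packing-dom-ditrees}), the block decomposition from Proposition~\ref{prp:packing}, and the observation that the fiber $^{\ell_i}\!G$ can only be dominated from $\{\ell_i,s\}\times V(G)$, yielding the pair of inequalities $|A_G|+|C_G|\ge\gamma(G)$ and $|B_G|+|C_G|\ge\gamma(G)$ and, in the extremal case, forcing $C_G=V(G)$ against $\gamma(G)\le n(G)-1$. Where you genuinely diverge is in the global bookkeeping. The paper first proves a selection lemma producing a maximum packing containing all isolated leaves and exactly one non-isolated leaf of each support vertex, extends it to a full partition $\Pi_1\cup\cdots\cup\Pi_k$ of $V(T)$, uses the assumed equality $|S|=k\gamma(G)$ to pin \emph{every} block at exactly $\gamma(G)$, and then derives the contradiction inside the one block containing $\{v,w_1,w_2\}$. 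You instead need only a maximum packing through $\ell_1$ (obtained by a short exchange in $\cing(T)$, using that all $\cing(T)$-neighbours of $\ell_1$ lie in $\{s\}\cup N^+_T(s)$ and at most one of those can be in a packing), and you add the disjoint strip $\{\ell_2\}\times V(G)$ to the block of $\ell_1$ to get the region $R$ with $|S\cap R|\ge\gamma(G)+1$ directly. This buys you a strictly stronger, quantitative conclusion, namely $\gamma(T\cp G)\ge\gamma(T)\gamma(G)+1$ whenever such a support vertex exists, and it avoids both the packing-selection lemma and the verification that the partition exhausts $V(T)$. One caveat, which you correctly identify and which applies equally to the paper's proof: the final step uses $\gamma(G)\le n(G)-1$, which fails for $G=K_1$ (where $T\cp G\cong T$ and the conclusion of the theorem is actually false), so the statement implicitly requires $n(G)\ge 2$.
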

\begin{proof}
Let $S$ be a minimum dominating set of $T\cp G$, and for each vertex $x$ of $T$, let $S_x=S \cap V(^x\!G)$.  For each support vertex $v$ of $T$, let $I_v$ represent the set,
which might be empty, of isolated leaves adjacent to $v$. We claim there exists a maximum packing $P$ of $T$ such that for each support vertex $v$ of $T$, $I_v \subseteq P$ and exactly one non-isolated leaf of $v$ (should it exist) is in $P$. To see this, from among all maximum packings of $T$, choose  $P$ containing the maximum number of leaves.   Let $v$ be any support vertex of $T$. If $v \in P$, then $(P - \{v\}) \cup I_v \cup \{\ell\}$ where $\ell$ is any non-isolated leaf of $v$ (should it exist) is another maximum packing containing more leaves than $P$, which contradicts our choice of $P$. Therefore, $v \not\in P$. Note that this implies that $I_v \subseteq P$ for otherwise $P$ is not maximum. Moreover, if no support of $T$ is adjacent to a non-isolated leaf, then $P$ is our desired packing. Therefore, we shall assume that $v$ is adjacent to at least one non-isolated leaf. Suppose there exists some $w \in P$ which is a non-leaf out-neighbor of $v$. We claim that $(P- \{w\})\cup \{\ell\}$ where $\ell$ is a non-isolated leaf of $v$ is also a packing of $T$. Indeed, suppose $t \in P$ is a non-leaf in- or out-neighbor of $v$ other than $w$. Then $vt \not\in A(T)$ since $vw \in A(T)$ and $P$ is a packing. Thus, $\ell$ has no common in-neighbor with $t$ and $(P - \{w\})\cup \{\ell\}$ is in fact a packing containing more leaves than $P$, which is a contradiction. Therefore, $P$ does not contain a non-leaf out-neighbor of $v$. It follows that $P$ contains a non-isolated leaf of $v$, for otherwise $P$ is not maximum. Thus, $P$ is our desired packing. Enumerate the vertices of $P$ as $x_1, \dots ,x_k$. We construct a partition $\Pi= \Pi_1 \cup \dots \cup\Pi_k$ of $V(T)$ as follows. Let $x_i \in \Pi_i$. If $v$ is an in-neighbor of $x_i$, place $v \in \Pi_i$. For all remaining $v$, choose the smallest index $i$ such that either $v$ is an out-neighbor of $x_i$ or $v$ and $x_i$ have a common in-neighbor. Notice that all vertices of $T$ have been placed in some $\Pi_i$, for otherwise since $\ugr{T}$ is connected, there exists $x \in V(T)$ such that $x$ is neither an in-neighbor nor an out-neighbor of any vertex of $P$ and it does not share a common in-neighbor with a vertex of $P$. This implies that $P$ is not maximum.

As noted in the proof of Proposition~\ref{prp:packing}, $|S \cap (\Pi_i\times V(G))| \ge \gamma(G)$ for all $i \in [k]$. Furthermore, $\rho(T)=\gamma(T)$ by Theorem~\ref{thm:packing-dom-ditrees}, and hence $|S| = \gamma(T)\gamma(G)=k\gamma(G)$, which implies that $|S \cap (\Pi_i \times V(G))| = \gamma(G)$, for every $i \in [k]$. Let $v$ be a strong support vertex of $T$ adjacent to two non-isolated leaves $w_1$ and $w_2$. Whether or not $w_1$ or $w_2$ is in $P$, we know that $v, w_1$, and $w_2$ are in the same set $\Pi_i$ for some $1 \le i \le k$. Reindexing if necessary, we may assume $\{v, w_1, w_2\}\subseteq \Pi_1$. We also know that $|S_{w_1}| + |S_v| \ge \gamma(G)$ and $|S_{w_2}| + |S_v| \ge \gamma(G)$. Assuming that $|S_{w_1}| \le |S_{w_2}|$, we have
\[2|S_v| + 2|S_{w_2}| \ge 2|S_v| + |S_{w_1}| + |S_{w_2}| \ge 2 \gamma(G).\]
On the other hand, $|S_v| + |S_{w_2}| \le \gamma(G)$ since $|S \cap (\Pi_1\times V(G)| = \gamma(G)$. Therefore, $|S_{w_1}| = 0$, which implies that $S_v = V(G)$.  This is a contradiction since  $\gamma(G) \le n(G) -1$ follows from the fact that $\ugr{G}$ is connected. Thus, $T$ does not contain a strong support adjacent to two non-isolated leaves.
\end{proof}

The next result shows that adding isolated leaves arbitrarily can be used to enlarge the family of digraphs (in particular, ditrees) attaining the Vizing equality.

\begin{prop}
\label{prp:attachisolated}
Let $G$ and $H$ be digraphs such that $\gamma(G\cp H)=\gamma(G)\gamma(H)$. If $G'$ is obtained from $G$ by attaching a new isolated leaf to any vertex of $G$ such that $\gamma(G')=\gamma(G)+1$ and $G'$ satisfies Vizing's inequality, then $\gamma(G'\cp H)=\gamma(G')\gamma(H)$.
\end{prop}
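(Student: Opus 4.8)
The plan is to split the statement into its two inequalities and observe that one of them is free. Since $G'$ is assumed to satisfy Vizing's inequality, we immediately obtain $\gamma(G'\cp H)\ge\gamma(G')\gamma(H)$, so the entire content of the proposition lies in the reverse bound $\gamma(G'\cp H)\le\gamma(G')\gamma(H)=(\gamma(G)+1)\gamma(H)$, where the final equality uses the hypothesis $\gamma(G')=\gamma(G)+1$. I would establish this by exhibiting an explicit dominating set of $G'\cp H$ of exactly that size.

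First I would record the local structure of the attached leaf. Let $\ell$ be the new isolated leaf and let $x$ be its unique neighbor in $G$. Because $\ell$ is isolated it has no in-neighbors, so the only arc incident with $\ell$ is $\ell x$; that is, $N^-_{G'}(\ell)=\emptyset$ and $N^+_{G'}(\ell)=\{x\}$. Consequently the subdigraph of $G'\cp H$ induced on $V(G)\times V(H)$ is exactly $G\cp H$: the only arcs that adjoining $\ell$ contributes to $G'\cp H$ are those leaving the fiber $^{\ell}\!H$, namely the arcs $(\ell,h)\to(x,h)$ for $h\in V(H)$ together with the internal arcs of $^{\ell}\!H\cong H$.

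Next I would build the dominating set. Let $S$ be a $\gamma(G\cp H)$-set; by hypothesis $|S|=\gamma(G)\gamma(H)$, and since $S\subseteq V(G)\times V(H)$ dominates the induced copy $G\cp H$, it dominates every vertex $(g,h)$ with $g\in V(G)$ inside $G'\cp H$ as well. The only vertices left undominated are those of the fiber $^{\ell}\!H$. The key observation is that a vertex $(\ell,h)$ can be dominated only from within $^{\ell}\!H$: any external dominator $(g',h)$ would require $g'\ell\in A(G')$, forcing $g'\in N^-_{G'}(\ell)=\emptyset$. Hence dominating $^{\ell}\!H$ requires a dominating set of that fiber, which is isomorphic to $H$. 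Taking $D$ to be a $\gamma(H)$-set, the set $S\cup(\{\ell\}\times D)$ dominates all of $G'\cp H$ and has size $\gamma(G)\gamma(H)+\gamma(H)=(\gamma(G)+1)\gamma(H)=\gamma(G')\gamma(H)$. Combining with the lower bound yields the claimed equality.

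I do not expect a serious obstacle: the argument is a short construction rather than an extremal estimate. The one point requiring care is the verification that adjoining $\ell$ changes nothing inside the $V(G)\times V(H)$ part, and, dually, that nothing outside $^{\ell}\!H$ can assist in dominating $^{\ell}\!H$; both follow cleanly from $N^-_{G'}(\ell)=\emptyset$. Finally, I would note that each hypothesis is used exactly once: $\gamma(G')=\gamma(G)+1$ enters in computing the size of the constructed set, while the assumption that $G'$ satisfies Vizing's inequality supplies the matching lower bound.
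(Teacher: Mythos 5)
Your proof is correct and follows essentially the same route as the paper: the lower bound comes for free from the hypothesis that $G'$ satisfies Vizing's inequality, and the upper bound $\gamma(G'\cp H)\le \gamma(G)\gamma(H)+\gamma(H)$ comes from augmenting a minimum dominating set of $G\cp H$ by a dominating set of the leaf's $H$-fiber. The only difference is that you spell out the construction behind the upper bound, which the paper merely asserts with ``Note that''.
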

\begin{proof}
Let $V(G')=V(G)\cup \{x\}$ and $A(G')=A(G)\cup\{xv\}$, where $v\in V(G)$ is a vertex of $G$ such that $\gamma(G')= \gamma(G)+1$ and $G'$ satisfies Vizing's inequality. Note that $\gamma(G'\cp H)\le \gamma(G)\gamma(H)+\gamma(H)$. Now, if $\gamma(G'\cp H)<\gamma(G)\gamma(H)+\gamma(H)=\gamma(G')\gamma(H)$, this is a contradiction with $G'$ being a digraph that satisfies Vizing's inequality. Therefore, $\gamma(G'\cp H)=\gamma(G)\gamma(H)+\gamma(H)=\gamma(G')\gamma(H)$.
\end{proof}

\begin{cor} \label{cor:attachisolated}
Let $T$ be a ditree and $H$ a digraph such that $\gamma(T\cp H)=\gamma(T)\gamma(H)$. If $T'$ is obtained from $T$ by attaching a new isolated leaf to any vertex of $T$ such that $\gamma(T')=\gamma(T)+1$, then $\gamma(T'\cp H)=\gamma(T')\gamma(H)$.
\end{cor}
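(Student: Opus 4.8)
The plan is to derive Corollary~\ref{cor:attachisolated} directly from Proposition~\ref{prp:attachisolated}, which does all the heavy lifting. The corollary is the special case of the proposition obtained by taking $G=T$, a ditree. So the entire task reduces to verifying that the hypotheses of Proposition~\ref{prp:attachisolated} are satisfied when we specialize to ditrees, and then invoking it.

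First I would observe that $T$ is a ditree and $H$ a digraph with $\gamma(T\cp H)=\gamma(T)\gamma(H)$, which is precisely the hypothesis $\gamma(G\cp H)=\gamma(G)\gamma(H)$ in the proposition with $G=T$. Next, I would let $T'$ be the digraph obtained from $T$ by attaching a new isolated leaf $x$ to some vertex of $T$ (so $V(T')=V(T)\cup\{x\}$ and the single new arc points from $x$ into $T$, matching the construction $A(G')=A(G)\cup\{xv\}$ in the proposition). The key point to check is that $T'$ is itself a ditree: attaching a new vertex adjacent to exactly one existing vertex keeps the underlying graph a tree, so $\ugr{T'}$ is a tree and hence $T'$ is a ditree by definition.

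The one remaining hypothesis of Proposition~\ref{prp:attachisolated} that must be supplied is that $T'$ satisfies Vizing's inequality. This is the step that could in principle be an obstacle, but here it is free: by Corollary~\ref{cor:allditrees}, \emph{every} ditree satisfies Vizing's inequality, and since $T'$ is a ditree, it satisfies $\gamma(T'\cp H')\ge\gamma(T')\gamma(H')$ for every digraph $H'$. Thus the assumption in the proposition (which for general digraphs $G'$ must be imposed by hand) is automatically fulfilled in the ditree setting. With the remaining numerical hypothesis $\gamma(T')=\gamma(T)+1$ supplied in the statement of the corollary itself, all conditions of Proposition~\ref{prp:attachisolated} hold.

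Finally I would apply Proposition~\ref{prp:attachisolated} with $G=T$ and $G'=T'$ to conclude $\gamma(T'\cp H)=\gamma(T')\gamma(H)$, which is exactly the desired equality. The main conceptual content is recognizing that the ditree hypothesis, via Corollary~\ref{cor:allditrees}, automatically discharges the ``$G'$ satisfies Vizing's inequality'' clause of the proposition; there is no genuine obstacle beyond this bookkeeping, so the proof is a short two-line deduction.
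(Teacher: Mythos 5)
Your proof is correct and is exactly the derivation the paper intends: the corollary is the specialization of Proposition~\ref{prp:attachisolated} to $G=T$, with the ``$G'$ satisfies Vizing's inequality'' hypothesis discharged automatically because $T'$ is again a ditree and Corollary~\ref{cor:allditrees} applies. The paper leaves this proof implicit, and your write-up supplies precisely the missing bookkeeping.
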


Hence, in view of the above corollary, in searching for the ditrees that attain equality in Vizing's inequality, we can restrict to ditrees $T$ whose isolated vertices are essential for keeping the domination number of $T$ (that is, removing an isolated leaf from $T$ drops the domination number).

In the next result we present some properties that two ditrees which attain the equality in Vizing's inequality must enjoy.

\begin{thm} Let $T_1$ and $T_2$ be ditrees of order at least $3$ such that $\gamma(T_1 \cp T_2) = \gamma(T_1) \gamma(T_2)$. If $P_1$ and $P_2$ are maximum packings of $T_1$ and $T_2$, respectively, then $P_i$ dominates $\ugr{T_i}$ for each $i\in [2]$. Moreover, every maximum packing of $T_1$ or $T_2$ contains all isolated leaves.
\end{thm}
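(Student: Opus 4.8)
The plan is to exploit the block structure underlying Proposition~\ref{prp:packing} in the equality case and then to analyze, fiber by fiber, how a minimum dominating set of $T_1\cp T_2$ is forced to behave. Write $k=\gamma(T_1)=\rho(T_1)$ (using Theorem~\ref{thm:packing-dom-ditrees}), fix the given maximum packings $P_1=\{x_1,\dots,x_k\}$ and $P_2$, and let $S$ be a $\gamma(T_1\cp T_2)$-set. Since the blocks $B_i=N^-_{T_1}[x_i]\times V(T_2)$ are pairwise disjoint and each fiber $^{x_i}\!T_2$ can be dominated only from $B_i$, each $B_i$ contains at least $\gamma(T_2)$ vertices of $S$; because $|S|=\gamma(T_1)\gamma(T_2)=k\gamma(T_2)$, every $B_i$ contains \emph{exactly} $\gamma(T_2)$ vertices of $S$, $S\subseteq N^-_{T_1}[P_1]\times V(T_2)$, and $S\cap B_i$ induces a $\gamma(T_2)$-set on the fiber $^{x_i}\!T_2\cong T_2$. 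I record one consequence: this fiber-dominating set must use at least one vertex lying on the fiber itself (first coordinate $x_i$), for otherwise all its vertices are \emph{off-fiber}, each dominating a single vertex of $^{x_i}\!T_2$, which would force $\gamma(T_2)=n(T_2)$; but $T_2$ has order at least $3$ and connected underlying graph, hence contains an arc, so $\gamma(T_2)\le n(T_2)-1$.

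The \emph{moreover} statement is the cleaner of the two, so I would prove it first. Let $\ell$ be an isolated leaf of $T_1$ with support $s$, so $\ell s\in A(T_1)$ and $N^-_{T_1}[\ell]=\{\ell\}$, and suppose some maximum packing $P_1$ avoids $\ell$. Since $N^-_{T_1}[\ell]=\{\ell\}$ meets $N^-_{T_1}[x_i]$ only when $x_i=s$, maximality forces $s\in P_1$ (otherwise $P_1\cup\{\ell\}$ is a strictly larger packing). Now $^{\ell}\!T_2$ can be dominated only from the column $\{\ell\}\times V(T_2)$, and a vertex $(\ell,h')$ dominates $(\ell,h)$ exactly when $h\in N^+_{T_2}[h']$, so the second coordinates of $S\cap(\{\ell\}\times V(T_2))$ form a dominating set of $T_2$ and hence number at least $\gamma(T_2)$. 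As these vertices lie in $B_s$, they exhaust $S\cap B_s$; but every vertex of this column is off-fiber with respect to $s$ (dominating only the single fiber vertex at its own second coordinate), so $S\cap B_s$ dominates $^{s}\!T_2$ in only $\gamma(T_2)$ coordinates, forcing $\gamma(T_2)=n(T_2)$ --- a contradiction. Hence $\ell\in P_1$, and by symmetry the same holds in $T_2$.

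For the domination statement I would argue by contradiction. Suppose $P_1$ does not dominate $\ugr{T_1}$ and pick $z\notin N_{\ugr{T_1}}[P_1]$; then $z\notin N^-_{T_1}[P_1]$, so the column $\{z\}\times V(T_2)$ meets no block and $S$ contains no vertex of it. The only in-neighbours of $z$ in $N^-_{T_1}[P_1]$ are common in-neighbours $g_i$ of $z$ and some $x_i$ (an in-neighbour equal to $x_i$ would make $z$ an out-neighbour of $x_i$; these $g_i$ are distinct and each shared with a unique $x_i$, since a repetition would violate the packing property or create a $4$-cycle in the tree), and each $(g_i,h)\in S$ dominates $(z,h)$ only. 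If no such $g_i$ exists, then $N^-_{T_1}[z]$ is disjoint from every $N^-_{T_1}[x_i]$ and $P_1\cup\{z\}$ is a larger packing, a contradiction. Otherwise $^{z}\!T_2$ must be covered entirely by these single-vertex dominations: setting $Q_i=\{h:(g_i,h)\in S\}$, we get $\bigcup_i Q_i=V(T_2)$, and each $Q_i$ is contained in the off-fiber coordinate set of the $\gamma(T_2)$-set induced by $S\cap B_i$ on $^{x_i}\!T_2$. Call a vertex $h\in V(T_2)$ \emph{exposed} if some minimum dominating set $W$ of $T_2$ contains $h$ with $W\setminus\{h\}$ already out-dominating $V(T_2)\setminus\{h\}$. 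A short computation with the off-fiber decomposition of $S\cap B_i$ shows that if $h\in Q_i$ then $h$ is exposed.

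The main obstacle is now isolated into a single purely $T_2$-intrinsic claim: \emph{every ditree of order at least $3$ possesses a non-exposed vertex.} Granting this, choose a non-exposed $h^\ast$; then $h^\ast\notin Q_i$ for every $i$, contradicting $\bigcup_i Q_i=V(T_2)$, and the proof of the domination statement is complete (with the symmetric argument giving the conclusion for $P_2$). I expect the crux of the write-up to be establishing the non-exposed-vertex lemma, which should be proved by a structural/extremal analysis of ditrees: the examples to keep in mind are the middle vertex of a directed path $\overrightarrow{P_3}$, the centre of a directed star, and any sink that lies in no minimum dominating set, each of which is non-exposed because removing it from any minimum dominating set leaves some other vertex undominated. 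The delicate point will be to show such a vertex always exists regardless of how the arcs of $T_2$ are oriented (including the presence of digons), and I would attempt this by locating a vertex that is either absent from every minimum dominating set or retains a private out-neighbour in every minimum dominating set that contains it.
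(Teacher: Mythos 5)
Your reduction of the first (domination) claim has a genuine gap: everything is funneled into the assertion that \emph{every ditree of order at least $3$ possesses a non-exposed vertex}, and you explicitly leave that lemma unproved (``I would attempt this by locating a vertex that\dots''). This lemma is the entire content of your argument for the main claim, and it is not a routine verification: the analogous statement is \emph{false} for general digraphs --- in the strongly connected oriented triangle $\overrightarrow{C_3}$ every vertex lies in a minimum dominating set $\{v_i,v_{i+1}\}$ whose other element already out-dominates the remaining two vertices, so every vertex is exposed in your sense --- hence any proof must make essential structural use of the tree hypothesis, presumably by an induction or leaf analysis that you have not carried out. Until that lemma is established, the first assertion of the theorem is not proved. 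For contrast, the paper's argument is a two-line packing extension that avoids all of this: if $P_1$ fails to dominate some $u$ in $\ugr{T_1}$, pick any $y\in V(T_2)-P_2$ (possible since $\ugr{T_2}$ is connected of order at least $3$) and check directly that $(P_1\times P_2)\cup\{(u,y)\}$ is a packing of $T_1\cp T_2$; then $\gamma(T_1)\gamma(T_2)=\gamma(T_1\cp T_2)\ge\rho(T_1\cp T_2)>\rho(T_1)\rho(T_2)=\gamma(T_1)\gamma(T_2)$ by Theorem~\ref{thm:packing-dom-ditrees} and Observation~\ref{obs:immediate}, a contradiction. You may want to look for an argument of this packing-extension type rather than pushing the fiber analysis through.

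Your treatment of the ``moreover'' part, on the other hand, is correct and is genuinely different from (and stronger than) the paper's. The paper adjoins $(u_1,u_2)$ to $P_1\times P_2$ for isolated leaves $u_1\notin P_1$, $u_2\notin P_2$, which only yields the disjunction ``$P_1$ contains all isolated leaves of $T_1$ \emph{or} $P_2$ contains all isolated leaves of $T_2$.'' Your block-counting argument --- the column $\{\ell\}\times V(T_2)$ must carry at least $\gamma(T_2)$ vertices of $S$, these exhaust $S\cap B_s$, and then the fiber $^{s}T_2$ receives only $\gamma(T_2)<n(T_2)$ dominated vertices --- is sound (I checked the disjointness of the blocks, the equality $|S\cap B_i|=\gamma(T_2)$, and the fact that an isolated leaf satisfies $N^-_{T_1}[\ell]=\{\ell\}$ and $\ell s\in A(T_1)$) and it establishes the conclusion for each factor separately, independent of the other. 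That part could stand on its own; the first part cannot as written.
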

\begin{proof} First, notice that $P_1 \times P_2$ is a packing of $T_1 \cp T_2$. Suppose $P_1$ does not dominate $u$ in $\ugr{T_1}$ and pick any vertex $y \in V(T_2) - P_2$. We claim that $(P_1 \times P_2) \cup\{(u, y)\}$ is a packing of $T_1\cp T_2$. To see this, note that $(P_1 \times P_2) \cup \{(u, y)\}$ is an independent set. Suppose there exists $(x_1, x_2) \in P_1 \times P_2$ such that $(s, t)$ is an in-neighbor of both $(x_1, x_2)$ and $(u, y)$. It follows that either $s = x_1$ or $t = x_2$. Suppose first that $s = x_1$. Thus, $x_1u \in A(T_1)$ meaning that $P_1$ dominates $u$ in $\ugr{T_1}$, contradicting our assumption. Therefore, we shall assume that $t=x_2$. Hence, $s = u$ and $sx_1 \in A(T_1)$ implying that $P_1$ dominates $u$ in $\ugr{T_1}$, which is another contradiction.  Thus, $(x_1, x_2)$ and $(u, y)$ have no common in-neighbor and $(P_1 \times P_2) \cup \{(u, y)\}$ is a packing of $T_1 \cp T_2$.  Applying Theorem~\ref{thm:packing-dom-ditrees} we infer that $\gamma(T_1)\gamma(T_2)= \gamma(T_1\cp T_2) \ge \rho(T_1\cp T_2) > \rho(T_1)\rho(T_2) = \gamma(T_1)\gamma(T_2)$, which is not possible. Therefore, $P_1$ dominates $\ugr{T_1}$. A similar argument shows that $P_2$ dominates $\ugr{T_2}$.
Finally, suppose that there exists an isolated leaf $u_1 \not\in P_1$ and an isolated leaf $u_2 \not\in P_2$. Then $(P_1 \times P_2) \cup \{(u_1, u_2)\}$ is a packing in $T_1 \cp T_2$ for it is an independent set and $(u_1, u_2)$ has no in-neighbor. However, this implies that $\gamma(T_1 \cp T_2) > \gamma(T_1)\gamma(T_2)$. So either $P_1$ contains all isolated leaves of $T_1$ or $P_2$ contains all isolated leaves of $T_2$. \end{proof}

Let us now present a large infinite family of ditrees that attain the equality in Vizing's inequality. More precisely, if $T$ is an arbitrary ditree, we construct a ditree $T^*$ that contains $T$ as a subdigraph, and where $\gamma(T^*\cp P_4)=\gamma(T^*)\gamma(P_4)$. By $P_4$ we denote the digraph whose underlying graph is $P_4$ such that between every two adjacent vertices there are arcs in both directions, and let $V(P_4)=\{v_1,v_2,v_3,v_4\}$.

\begin{figure}[h!]
\begin{center}
\begin{tikzpicture}[auto]
\tikzset{edge/.style = {->,> = latex'}}

	\vertex (a) at (0,0) [scale=.75pt,fill=white, label=below:$a$]{};
	\vertex (b) at (1.5,0) [scale=.75pt,fill=white,label=below:$b$]{};
	\vertex (c) at (3,0) [scale=.75pt,fill=white, label=below:$c$]{};
	\vertex (x) at (4.5,0) [scale=.75pt,fill=white, label=below:$x$]{};
	\vertex (c') at (6,0) [scale=.75pt,fill=white, label=below:$c'$]{};
	\vertex (b') at (7.5,0) [scale=.75pt,fill=white, label=below:$b'$]{};
	\vertex (a') at (9,0) [scale=.75pt,fill=white, label=below:$a'$]{};
	
	\vertex (1) at (-1.5,1.5) [scale=.75pt,fill=white, label=left:$v_1$]{};
    \vertex (2) at (-1.5,3) [scale=.75pt,fill=white, label=left:$v_2$]{};
    \vertex (3) at (-1.5,4.5) [scale=.75pt,fill=white, label=left:$v_3$]{};
   	\vertex (4) at (-1.5,6) [scale=.75pt,fill=white, label=left:$v_4$]{};
   	 \foreach \i in {1,...,4}
   	{
   	\vertex (a\i) at (0,1.5*\i) [scale=.75pt,fill=white]{};
	\vertex (b\i) at (1.5,1.5*\i) [scale=.75pt,fill=white]{};
	\vertex (c\i) at (3,1.5*\i) [scale=.75pt,fill=white]{};
	\vertex (x\i) at (4.5,1.5*\i) [scale=.75pt,fill=white]{};
	\vertex (c'\i) at (6,1.5*\i) [scale=.75pt,fill=white]{};
	\vertex (b'\i) at (7.5,1.5*\i) [scale=.75pt,fill=white]{};
	\vertex (a'\i) at (9,1.5*\i) [scale=.75pt,fill=white]{};
	}
	
	\vertex (a2) at (0,3) [scale=.75pt,fill=black]{};
	\vertex (a3) at (0,4.5) [scale=.75pt,fill=black]{};
	\vertex (c1) at (3,1.5) [scale=.75pt,fill=black]{};
	\vertex (c4) at (3,6) [scale=.75pt,fill=black]{};
	\vertex (c'2) at (6,3) [scale=.75pt,fill=black]{};
	\vertex (c'3) at (6,4.5) [scale=.75pt,fill=black]{};
	\vertex (a'1) at (9,1.5) [scale=.75pt,fill=black]{};
	\vertex (a'4) at (9,6) [scale=.75pt,fill=black]{};

	\draw[edge, thick] (a) to (b);
	\draw[edge, thick] (a') to (b');
	\draw[edge, thick] (c') to (x);
	\draw[edge, thick] (c) to (x);
	\draw[edge, thick] (a) to (b);
	\draw[edge, thick] (b) to (c);
	\draw[edge, thick] (c) to (b);
	\draw[edge, thick] (b') to (c');
	\draw[edge, thick] (c') to (b');
	
	\draw[edge, thick] (1) to (2);
	\draw[edge, thick] (2) to (1);
	\draw[edge, thick] (3) to (2);
	\draw[edge, thick] (2) to (3);
	\draw[edge, thick] (3) to (4);
	\draw[edge, thick] (4) to (3);
	
\end{tikzpicture}
\end{center}
\caption{The Cartesian product $K_1^*\cp P_4$ with vertices of a dominating set darkened.}
\label{fig:family}
\end{figure}
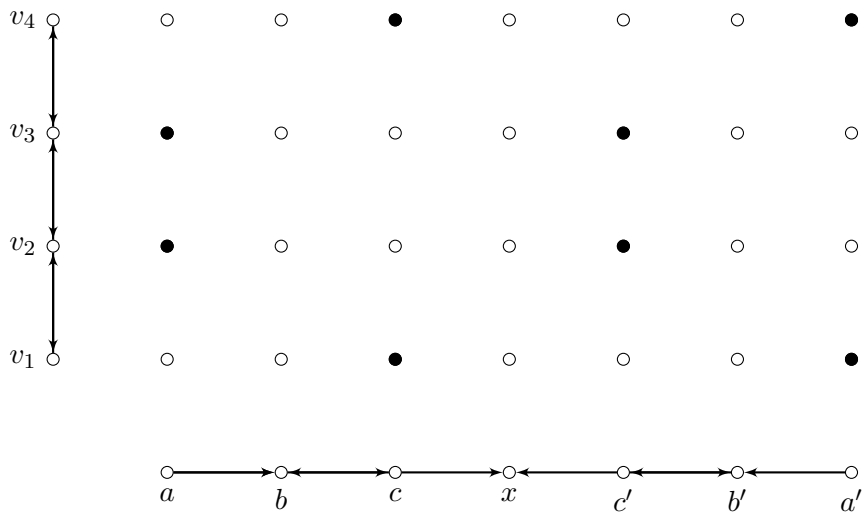

The ditree $K_1^*$ is depicted at the bottom of Figure~\ref{fig:family}, and $P_4$ is on the left in that figure. Clearly, $\gamma(K_1^*)=4$ and $\gamma(P_4)=2$. The figure represents the Cartesian product $K_1^*\cp P_4$, where arcs are omitted for a clearer presentation, and dark vertices present a (minimum) dominating set of $K_1^*\cp P_4$.
We claim that $\gamma(K_1^*\cp P_4)=8=\gamma(K_1^*)\gamma(P_4)$.
Indeed, since $$S=\{(a,v_2),(a,v_3),(c,v_1),(c,v_4),(c',v_2),(c',v_3),(a',v_1),(a',v_4)\}$$ is a dominating set of $K_1^*\cp P_4$, we infer $\gamma(K_1^*\cp P_4)\le 8$, while the reversed inequality follows by Corollary~\ref{cor:allditrees}. More generally, let $T^*$ be obtained from a ditree $T$, by taking $n(T)$ copies of the ditree $K_1^*$, and identify each vertex $v$ of $T$ with the vertex $x$ of its own copy of $K_1^*$. Note that $\gamma(T^*)=4n(T)$, and $\gamma(T^*\cp P_4)=8n(T)=2\gamma(T^*)$. In particular, this example shows that the equality in Vizing's inequality can be attained by ditrees with arbitrarily large vertex in- or out-degrees.

The problem of finding the pairs of ditrees that attain the equality in Corollary~\ref{cor:allditrees} seems to be difficult. The special version of this problem for  undirected trees was posed in~\cite{JK}, and is to our knowledge still unresolved.

\begin{prob}
Characterize the pairs of ditrees $T_1$ and $T_2$ such that $\gamma(T_1\cp T_2)=\gamma(T_1)\gamma(T_2)$.
\end{prob}

The following question is also interesting.

\begin{prob}
For which ditrees $T$ does there exist a digraph $G$ such that $\gamma(T\cp G)=\gamma(T)\gamma(G)$?
\end{prob}

As we know from Theorem~\ref{thm:strongsupport}, $\gamma(T\cp G)=\gamma(G)\gamma(T)$ implies that every strong support vertex of $T$ is adjacent to at most one non-isolated leaf.  In addition, Corollary~\ref{cor:attachisolated} may be used in obtaining large families of ditrees from a given ditree $T$ enjoying $\gamma(T\cp G)=\gamma(G)\gamma(T)$. Indeed, if we attach any number of isolated leaves to a support vertex of $T$ and call the resulting tree $T'$, then $\gamma(T'\cp G)=\gamma(T')\gamma(G)$.

\section*{Acknowledgments}

The first author was supported by the Slovenian Research Agency (ARRS) under the grants P1-0297, J1-9109 and J1-1693.
\medskip

\end{document}